\theoremstyle{definition}
\newtheorem{definition}{Definition}[section]
\newtheorem{theorem}{Theorem}[section]
\newtheorem{proposition}{Proposition}[section]
\newtheorem{corollaryy}{Corollary}[proposition]
\newtheorem{lemma}[theorem]{Lemma}
\theoremstyle{definition}
\theoremstyle{remark}
\newtheorem*{remark}{Remark}
\newcommand{\ra}{\ensuremath{\rightarrow}}
\newcommand{\pml}{\ensuremath{\begin{pmatrix}}}
\newcommand{\pmr}{\ensuremath{\end{pmatrix}}}
\newcommand{\defeq}{\vcentcolon=}
\newcommand{\suchthat}{\text{ s.t. }}
\newcommand{\fg}{\mathfrak g}
\newcommand{\fm}{\mathfrak m}
\newcommand{\fh}{\mathfrak h}
\newcommand{\ZZ}{\mathbb{Z}}
\newcommand{\CC}{\mathbb{C}}
\newcommand{\QQ}{\mathbb{Q}}
\newcommand{\supp}{\operatorname{supp}}
\newcommand{\spec}{\operatorname{spec}}
\newcommand{\aut}{\operatorname{Aut}}
\newcommand{\Aut}{\operatorname{Aut}}
\newcommand{\ad}{\text{ad}}
\newcommand{\Ad}{\text{Ad}}
\begin{document}

\begin{titlepage}
    \begin{center}
        \vspace*{1cm}
 
        \Huge
        On a family of non-weight modules over Virasoro-type Lie (super)algebras
 
 
        \vspace{1cm}
    
        \LARGE
        Sarah Williamson
        
        \vspace{1cm}
        
        \Large
        A Thesis Submitted for the Degree of Master of Science to the Scientific Council of the Weizmann Institute of Science.
 
        \vfill
        
        \Large
        Department of Mathematics and Computer Science\\
        The Weizmann Institute of Science\\
        Israel
 
    \end{center}
\end{titlepage}

\begin{abstract}
    In this thesis we classify modules over a Witt-type Lie algebra and superalgebra such that when considered as modules of $\mathcal{U}(\fh)$ they are free of rank 1. We provide sufficient conditions for simplicity, and compute the action of an automorphism on the module. Lastly, we apply the weighting functor and show that the resulting modules are intermediate series modules.   
\end{abstract}

\tableofcontents


\section{Introduction} 


In this thesis we consider a Witt-type Lie algebra $\mathcal{W}_\Gamma$ for $\Gamma$ a subgroup of $\CC$ containing 1, which we will call the $\Gamma$-Witt algebra, and its super-analogue $s\mathcal{W}_\Gamma$, the $\Gamma$-Witt superalgebra. We will refer to the classic Witt algebra with $\mathcal{W}_\ZZ$. The centerless Ramond and Neveu-Schwarz Lie superalgebras are special cases of $s\mathcal{W}_\Gamma$, and we will denote them with $s\mathcal{W}_\ZZ$ and $s\mathcal{W}_{\ZZ + 1/2}$, respectively. The Witt-type Lie algebra $\mathcal{W}_\QQ$ was introduced by Mazorchuk in \cite{mazorchuk1997classification}. Naturally we have the central extensions, which we will call the $\Gamma$-Virasoro algebra and superalgebra, with notation $\mathcal{V}_\Gamma$ and $s\mathcal{V}_\Gamma$. A subalgebra spanned by $L_0$ is a Cartan subalgebra of $\mathcal{W}_\Gamma$; we denote this subalgebra with $\fh$. The algebra $s\mathcal{W}_\Gamma$ admits a Cartan subalgebra $s\fh$ containing $L_0$. 

 Intermediate series modules were introduced as part of V. G. Kac's conjecture regarding Harish-Chandra modules of Virasoro. V. G. Kac conjectured that any Harish-Chandra module over the Virasoro algebra was either highest weight, lowest weight, or a module of the intermediate series \cite{kac1979structure, kac1982some}. This theorem was proved by O. Mathieu in \cite{Mathieu1992} and we use the definition of an intermediate series module provided in his paper. Subsequently, the analogous superalgebra conjecture was proven by Y. Su in \cite{Yucai1995}. Each family of intermediate series modules over $\mathcal{W}_\ZZ$ (resp. $s\mathcal{W}_\ZZ$, $s\mathcal{W}_{\ZZ + 1/2}$) becomes a $\mathcal{W}_\Gamma$--module (resp. $s\mathcal{W}_\Gamma$--module), which we will call an intermediate series module over $\mathcal{W}_\Gamma$ (resp. $s\mathcal{W}_\Gamma$) and denote with $V_\CC(a)$ (resp. $sV_\CC(a)$). 
 
 Let $\fh \subset \fg$ be a Lie algebra such that $\Ad(\fh)$ acts diagonally on $\fg$. In \cite{Nilsson2016}, following a suggestion from O. Mathieu, J. Nilsson introduced a weighting functor $W$. This is a covariant right exact functor from the category of $\fg$--modules to the category of weight $\fg$-modules. On a weight module, $W$ acts by identity. Let $\mathcal{M}$ be the category of $\fg$--modules which are free of rank 1 as a $\mathcal{U}(\fh)$--module. For each $M \in \mathcal{M}$ the weight spaces of the module $W(M)$ are all one-dimensional; for a semisimple Lie algebra $\fg$ the modules $W(M)$ give coherent families. In this thesis we describe $\mathcal{M}$ for Witt-type Lie (super)algebras. 
 
 The main results of the thesis are the following. 

In \Cref{automorphism group section} we compute the group of automorphisms of $\mathcal{W}_\Gamma$ and show that the automorphisms preserve $\fh$. 

In \Cref{modules section} we classify $\mathcal{W}_\Gamma$--modules (resp. $s\mathcal{W}_\Gamma$--modules) which when restricted to $\mathcal{U}(\fh)$ are free of rank 1. These modules are of the form $\Omega_\Gamma(f, \alpha)$ (resp. $s\Omega_\Gamma(f, \alpha)$) where $\alpha \in \CC$, and $f: \Gamma \ra \CC^*$ is a group homomorphism. The modules $\Omega_\Gamma(f, \alpha), s\Omega_\Gamma(f, \alpha)$ are simple for $\alpha \neq 0$; there are monomorphisms
$$\Omega_\Gamma(f, 1)\to\Omega_\Gamma(f,0),\ \ \ \ \ 
\Pi(s\Omega_\Gamma(f, 1/2))\to s\Omega_\Gamma(f,0)$$
whose cokernels are the trivial representations. 
 We compute the action of automorphisms on $\Omega_\Gamma(f, \alpha)$ and $s\Omega_\Gamma(f, \alpha)$. The restriction of $\Omega_\Gamma(f, \alpha)$ (resp. $s\Omega_\Gamma(f, \alpha)$) to $\mathcal{W}_\ZZ$ (resp. $s\mathcal{W}_\ZZ$ and $s\mathcal{W}_{\ZZ + 1/2}$) are the modules $\Omega_\ZZ(\lambda, \alpha)$ (resp. $s\Omega_\ZZ(\lambda, \alpha)$ and $s\Omega_{\ZZ+1/2}(\lambda, \alpha)$), researched in \cite{lu2014irreducible, tan2015wn+, chen2018non, Yang2019} and others. 

Lastly, in \Cref{weighting functor section} we consider the weighting functor. We apply the weighting functor to the modules $\Omega_\Gamma(f, \alpha)$ (resp. $s\Omega_\Gamma(f, \alpha)$) and show that the resulting modules are of the form $V_\CC(1-\alpha)$ (resp. $sV_\CC(1-\alpha)$). Moreover, we show that as a $\mathcal{W}_\CC$--module (resp. $s\mathcal{W}_\CC$--module), $V_\CC(\alpha)$ (resp. $sV_\CC(\alpha)$) is simple for $\alpha \neq 0, 1$ (resp. $\alpha \neq 1/2, 1$).


\section{Preliminaries}


Our base field is $\CC$. Throughout this thesis $\ZZ, \; \QQ, \; \CC, \;\CC^*$ will denote the integers, rationals, complex numbers, and non-zero complex numbers respectively. For a Lie algebra $\fg$ we use $\mathcal{U}(\fg)$ to denote the \textit{universal enveloping algebra.} For a super vector space $V = V_{\overline{0}} \oplus V_{\overline{1}}$, we use $\Pi$ to denote the \textit{parity switching functor}: $\Pi(V)_{\overline{i}} \defeq V_{\overline{i + 1}}$. This gives a parity switching functor for modules over Lie superalgebras. 

For a subgroup $\Gamma \subset \CC$ containing 1 we consider the \textit{$\Gamma$-Witt algebra}. This generalization of the Witt algebra for $\Gamma = \QQ$ was introduced by Mazorchuk in \cite{mazorchuk1997classification}. Define $\mathcal{W}_\Gamma$ to be a Lie algebra with basis $\{L_m\}_{m \in \Gamma}$ and the commutation relations
\begin{equation}\label{Wittbracket}
    [L_m, L_n] = (m-n)L_{m+n}.
\end{equation}
The Lie algebra $\mathcal{W}_\Gamma$ contains the classical Witt algebra $\mathcal{W}_\ZZ$, and the Cartan subalgebra $\fh = \CC L_0$. The central extension of $\mathcal{W}_\Gamma$ is the \textit{$\Gamma$-Virasoro algebra}, which we will denote with $\mathcal{V}_\Gamma$, with the commutation relations
\begin{equation}\label{Virasorobracket}
    [L_m, C] = 0, \; \; [L_m, L_n] = (m-n)L_{m+n} + \delta_{m + n, 0} \frac{m^3 - m}{12}C 
\end{equation}

In preparation to defining the analogous superalgebra, let $\Gamma$ be a subgroup of $\CC \times \ZZ_2$. We set 
\[
\Gamma_0 \defeq \{x \in \CC \mid (x, 0) \in \Gamma\}, \; \; \Gamma_1 \defeq \{x \in \CC \mid (x, 1) \in \Gamma\}.
\]
We note that $\Gamma_0$ is a subgroup of $\CC$, $\Gamma_1 + \Gamma_1 \subset \Gamma_0$ and $\Gamma_1 + \Gamma_0 \subset \Gamma_1$. With this in hand we define the $\Gamma$-\textit{Witt superalgebra}. Let 
\[
s\mathcal{W}_\Gamma = (s\mathcal{W}_\Gamma)_{\overline{0}} \oplus (s\mathcal{W}_\Gamma)_{\overline{1}}
\]
be a Lie superalgebra with basis $\{L_m\}_{m \in \Gamma_0}$ in $(s\mathcal{W}_\Gamma)_{\overline{0}}$, $\{G_r\}_{r \in \Gamma_1}$ in $(s\mathcal{W}_\Gamma)_{\overline{1}}$ and commutation relations \eqref{Wittbracket}, 
\begin{equation}
    [G_r, G_s] = 2L_{r+s}, \; \;  
    [L_m, G_r] = \left(\frac{m}{2} - r \right) G_{m+r}.
\end{equation}
One has that 
\[
(s\mathcal{W}_\Gamma)_{\overline{0}} = \mathcal{W}_{\Gamma_0}. 
\]
In $s\mathcal{W}_\Gamma$ we contain the subalgebras \textit{centerless Ramond} for $\Gamma = \ZZ \times \ZZ_2$ and \textit{centerless Neveu-Schwarz} for $\Gamma_0 = \ZZ, \Gamma_1 = \ZZ + 1/2$, which will refer to with $s\mathcal{W}_\ZZ$ and $s\mathcal{W}_{\ZZ + 1/2}$ respectively. Furthermore, the space $s\fh = \CC L_0 \oplus \CC G_0$ (resp. $s\fh = \CC L_0$) with dimension $(1 \mid 1)$ (resp. dimension $(1 \mid 0)$) if $0 \in \Gamma_1$ (resp. $0 \notin \Gamma_1$) is a Cartan subalgebra of $s\mathcal{W}_\Gamma$.  

The central extension to $s\mathcal{W}_\Gamma$ is the $\Gamma$-\textit{Virasoro superalgebra}. Denote this with \[
s\mathcal{V}_\Gamma = (s\mathcal{V}_\Gamma)_{\overline{0}} \oplus (s\mathcal{V}_\Gamma)_{\overline{1}},
\] with basis $\{L_m, C\}_{m \in \Gamma_0}$ in $(s\mathcal{V}_\Gamma)_{\overline{0}}$, $\{G_r\}_{r \in \Gamma_1}$ in $(s\mathcal{V}_\Gamma)_{\overline{1}}$, and commutation relations \eqref{Virasorobracket},
\begin{align*}
    [L_m, G_r] = \left( \frac{m}{2} - r\right) G_{m + r}, \; \;  [G_r, G_s] = 2L_{r + s} + \frac{C}{3}\left(r^2 - \frac{1}{4}\right) \delta_{r+s, 0}
\end{align*}
and $[C, G_r] = 0$. Within $s\mathcal{V}_\Gamma$ we contain the well-known subalgebras \textit{Ramond} for $\Gamma_0 = \Gamma_1 = \ZZ$ and \textit{Neveu--Schwarz} for $\Gamma_0 = \ZZ, \Gamma_1 = \ZZ + 1/2$.


\section{Automorphisms}\label{automorphism group section}


Denote $\Gamma^{\lor}$ as the set of homomorphisms $\chi: \Gamma \ra \CC^*$ (i.e. the maps $\chi$ that satisfy $\chi(x_1 + x_2) = \chi(x_1)\chi(x_2)$ for each $x_1, x_2 \in \Gamma$.) Note that $\Gamma^{\lor}$ is an abelian group with the operation $(\chi_1\chi_2)(x) \defeq \chi_1(x)\chi_2(x)$. In this section we compute the automorphism groups of $\mathcal{W}_\Gamma$ (resp. $s\mathcal{W}_\Gamma$) and show that they preserve $\fh$ (resp. $s\fh$). 

\begin{lemma}\label{actingdiagonally}
    The only elements of $\mathcal{W}_\Gamma$ that act diagonally in the adjoint representation lie in $\CC L_0$.
\end{lemma}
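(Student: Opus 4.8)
The plan is to work directly with the commutation relations and exploit that $\{L_n\}_{n\in\Gamma}$ is a basis of eigenvectors for $\ad L_0$. Suppose $X=\sum_{n\in S} c_n L_n$ with $S\subset\Gamma$ finite, $c_n\neq 0$, acts diagonally in the adjoint representation. I want to show $S\subseteq\{0\}$. The first step is to diagonalize: since $[L_0,L_m]=-m L_m$, the operator $\ad L_0$ is already diagonal with eigenvalue $-m$ on $L_m$, so the generalized eigenspaces of $\ad X$ — if $\ad X$ is to be simultaneously diagonalizable with anything, or simply in order to extract information — are best probed by pairing $X$ against individual basis vectors $L_m$ and reading off coefficients in the basis $\{L_k\}$.

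The key computation is $[X,L_m]=\sum_{n\in S} c_n (n-m) L_{n+m}$. If $\ad X$ is diagonalizable, then in particular $\mathcal{W}_\Gamma$ decomposes as a direct sum of eigenspaces of $\ad X$. Pick $n_0\in S$ with $n_0\neq 0$ (aiming for a contradiction). Apply $\ad X$ repeatedly to $L_{m}$ for a well-chosen $m$: each application shifts the support by elements of $S$ and multiplies coefficients by factors $(n-\text{(current index)})$. The cleanest route is to show that if $X$ acts diagonally then $\ad X$ has finite-dimensional invariant subspaces spanned by the $L_k$ appearing, but the shift structure forces an infinite strictly increasing (or the relevant support to be unbounded) chain unless all nonzero $n\in S$ are absent. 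Concretely, consider the smallest nonzero element in absolute value, or use that $\Gamma$ is torsion-free: if $n_0\in S\setminus\{0\}$, then $L_{m}, L_{m+n_0}, L_{m+2n_0},\dots$ are pairwise distinct basis vectors, and by choosing $m$ generically (so that $n-m-kn_0\neq 0$ for all relevant $n\in S$, $k\ge 0$ — possible since $S$ is finite and $\Gamma$ infinite) one sees that the vector $L_m$ has $\ad X$-orbit spanning an infinite-dimensional space with a "triangular" shift action that has no eigenvectors, contradicting diagonalizability restricted to that invariant subspace.

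More carefully, I would argue as follows. Let $N=\max_{n\in S} n$ and $n_-=\min_{n\in S} n$ (real parts if necessary, but one can pick a group homomorphism $\Gamma\to\RR$ that is injective enough, or just work with the partial order from a chosen embedding; in fact it suffices to use that $\Gamma\hookrightarrow\CC$ and pick a generic $\RR$-linear functional $\ell:\CC\to\RR$ injective on the finite set $S$). Set $d=\ell(N)-\ell(n_-)$; if $X\notin\CC L_0$ then $d>0$. Looking at $\ad X$ acting on the span $V'$ of all $L_k$ with $k$ in the sub-semigroup generated by $S$ translated appropriately, the "top" coefficient (largest $\ell$-value) after $j$ applications of $\ad X$ to $L_m$ is $c_N^{\,j}\prod_{i=0}^{j-1}(N-m-iN)\cdot(\text{stuff})$, which is a nonzero scalar times $L_{m+jN}$ provided $N\neq 0$ and $m\ne$ the excluded values; these basis vectors are all distinct and have strictly increasing $\ell$-value, so no finite-dimensional $\ad X$-invariant subspace contains $L_m$ — hence $\ad X$ is not locally finite, let alone diagonalizable. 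The main obstacle is handling the possibility that the leading coefficient vanishes because of the factor $(N-m-iN)$, i.e. choosing $m$ so that no such cancellation ever occurs; since $S$ is finite this only excludes countably many $m$, and $\Gamma$ is infinite, so a valid $m$ exists — but one must also ensure $L_m$ itself is not accidentally killed, and track that the "leading term" genuinely stays at index $m+jN$ without being cancelled by other monomials (it cannot, since all other contributions land at strictly smaller $\ell$-value). Once local finiteness fails, diagonalizability fails, so $X\in\CC L_0$, completing the proof.
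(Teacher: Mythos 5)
Your core idea is correct and differs from the paper's route: the paper fixes the lexicographic-type total order $a+bi \succ c+di$ iff ($a>c$) or ($a=c$, $b>d$), observes that when $\max(\supp A) \neq \max(\supp B)$ the bracket's support has max exactly $\max(\supp A)+\max(\supp B)$, and then concludes that if $\max(\supp A)\neq 0$ every eigenvector $B$ of $\ad A$ must satisfy $\max(\supp B)=\max(\supp A)$, so the eigenvectors cannot span $\mathcal{W}_\Gamma$. You instead iterate $\ad X$ on a single well-chosen $L_m$ and show the resulting cyclic subspace is infinite-dimensional because the leading index marches off to infinity; since diagonalizability forces local finiteness (if $L_m=\sum v_\lambda$ is a finite sum of eigenvectors, its $\ad X$-orbit lies in $\spn\{v_\lambda\}$), you get a contradiction. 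The paper's version avoids any iteration or choice of $m$ and is shorter; yours has the pedagogical advantage of reducing the statement to the familiar fact that diagonalizable implies locally finite, and it also shows more, namely that $\ad X$ is not even locally finite.

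One place you should tighten: the justification that a valid $m$ exists. You write that ``this only excludes countably many $m$, and $\Gamma$ is infinite, so a valid $m$ exists,'' but that reasoning is unsound — $\Gamma$ may itself be countable (e.g.\ $\ZZ$ or $\QQ$), so ``countably many exclusions from an infinite set'' does not by itself guarantee anything. The conclusion is still true, and you can just be explicit: the only constraint on $m$ from the leading chain is $N-m-iN\neq 0$ for all $i\geq 0$, i.e.\ $m \notin \{N(1-i): i\geq 0\}$. Taking $m=2N$ (which is in $\Gamma$ since $N\in S\subset\Gamma$ and $\Gamma$ is a group) gives $N-m-iN = -(1+i)N\neq 0$ for all $i\geq 0$, so $m=2N$ always works. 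Similarly, the ``$\cdot(\text{stuff})$'' in your leading coefficient is a hedge you don't need — the coefficient of $L_{m+jN}$ in $(\ad X)^j L_m$ is exactly $c_N^{\,j}\prod_{i=0}^{j-1}(N-m-iN)$, since every other contribution lands at an index of strictly smaller $\ell$-value and cannot cancel it. With those two points cleaned up, your argument is a complete and valid alternative to the paper's.
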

    \begin{proof}
        For this proof, we impose the following order on elements of $\CC$: $a + bi > c + id$ if and only if $a > c$ or $a = c, b > d$. It is straightforward to check that, given four elements $a_1, a_2, b_1, b_2 \in \CC$ such that $a_1 < a_2,\; b_1 \leq b_2$, then $a_1 + a_2 < b_1 + b_2$. 
        
        For an element $\sum_{a\in \CC} x_a L_a \in \mathcal{W}_{\Gamma}$ 
        we set 
        $$\supp \left(\sum_{a\in \Gamma} x_a L_a\right):=\{a\in\CC|\ x_a \neq 0\}.$$
        For each non-zero $A, B \in  \mathcal{W}_{\Gamma}$ one has
        $$\max(\supp(A)) \neq \max(\supp(B)) \implies  \max(\supp([A,B]))=\max(\supp(A))+\max(\supp(B)).$$
        Let $A \in  \mathcal{W}_{\Gamma}$ be a non-zero element 
        acting diagonally in the adjoint representation.
        If $\max (\supp (A)) \neq 0$, then, by above, any eigenvector $B$
        of $\ad (A)$ satisfies 
        \[
        \max(\supp(B)) = \max(\supp(A)),
        \]
        so
        the eigenvectors do not span $\mathcal{W}_{\Gamma}$.
        Hence  $\max (\supp (A)) = 0$. Similarly,  $\min (\supp (A)) = 0$. Therefore,
        $\supp(A)= \{0\}$ and $A \in \CC L_0$.
    \end{proof}

\begin{theorem}\label{witautomorphisms}
\begin{enumerate}[i.]
    \item $\aut(\mathcal{W}_\Gamma) = \{\varphi_{a, f} \mid f \in \Gamma^{\lor}, \; a \in \CC^* \text{ s.t. } a\Gamma = \Gamma \}$ where
    \[
    \varphi_{a, f}(L_m) = a f(m) L_{ m/a}.
    \]
    
    \item $\aut(s\mathcal{W}_\Gamma) = \{\varphi_{a, f} \mid  f \in \Gamma^{\lor}, \; a \in \CC^* \suchthat a^2\Gamma = \Gamma, \; \text{Re}(a) \geq 0\}$ where 
    \[
    \varphi_{a, f}(L_m) = a^2 f(m) L_{m/a^2}, \; \; \varphi_{a,f}(G_r) = a f(r) G_{r/a^2}.
    \]
\end{enumerate}
\end{theorem}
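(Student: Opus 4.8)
The overall strategy is to first pin down the image of $\fh = \CC L_0$ under an arbitrary automorphism $\varphi$, then use the eigenvalue structure of $\ad(L_0)$ to reconstruct $\varphi$ explicitly on all of the basis $\{L_m\}$ (resp. $\{L_m, G_r\}$). For part (i): an automorphism $\varphi$ carries elements acting diagonally in the adjoint representation to elements acting diagonally, so by \Cref{actingdiagonally} it must send $\CC L_0$ into $\CC L_0$; hence $\varphi(L_0) = b L_0$ for some $b \in \CC^*$. Now decompose $\mathcal W_\Gamma = \bigoplus_{m \in \Gamma} \CC L_m$ into $\ad(L_0)$-eigenspaces (eigenvalue $-m$ on $L_m$). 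Since $\varphi$ intertwines $\ad(L_0)$ with $\ad(\varphi(L_0)) = b\,\ad(L_0)$, it must send the eigenspace for eigenvalue $-m$ to the eigenspace for eigenvalue $-m/b$; each such eigenspace is at most one-dimensional, so $\varphi(L_m) = c_m L_{m/b}$ for a scalar $c_m \in \CC^*$, and in particular $m/b \in \Gamma$ for all $m$, i.e. $\Gamma/b \subseteq \Gamma$; applying the same to $\varphi^{-1}$ gives $\Gamma/b = \Gamma$. Writing $a = b$, we then feed the ansatz $\varphi(L_m) = c_m L_{m/a}$ into the bracket relation \eqref{Wittbracket}: $\varphi([L_m,L_n]) = (m-n)c_{m+n}L_{(m+n)/a}$ must equal $[\varphi(L_m),\varphi(L_n)] = c_m c_n (m/a - n/a) L_{(m+n)/a} = \tfrac1a c_m c_n (m-n) L_{(m+n)/a}$, so for all $m \ne n$ we get $c_{m+n} = \tfrac1a c_m c_n$. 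Setting $f(m) := c_m/a$ this reads $f(m+n) = f(m)f(n)$, and one checks (using that $1 \in \Gamma$ so there are enough pairs $m\ne n$, and the $m=n$ case follows by $f(2m)=f(m+0)\cdot$ something or by continuity of the relation through a third element) that $f$ extends to a genuine homomorphism $\Gamma \to \CC^*$. This yields exactly $\varphi = \varphi_{a,f}$ with $c_m = a f(m)$; conversely every such $\varphi_{a,f}$ is readily verified to be an automorphism.

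For part (ii) the skeleton is the same but with two extra wrinkles. First, the even part $(s\mathcal W_\Gamma)_{\eveng} = \mathcal W_{\Gamma_0}$ must be preserved, and a homogeneous automorphism of a Lie superalgebra is even, so $\varphi$ restricts to an automorphism of $\mathcal W_{\Gamma_0}$; by part (i) this restriction is $\varphi_{b,g}$ for some $b \in \CC^*$ with $b\Gamma_0 = \Gamma_0$ and $g \in \Gamma_0^{\lor}$, so $\varphi(L_m) = b\,g(m) L_{m/b}$. Then $\varphi$ must carry the $\ad(L_0)$-eigenspace $\CC G_r$ (eigenvalue $r$, since $[L_0,G_r] = -r G_r$) to the eigenspace for $\ad(\varphi(L_0)) = b\,\ad(L_0)$, forcing $\varphi(G_r) = d_r G_{r/b}$ with $r/b \in \Gamma_1$. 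Imposing $[G_r,G_s] = 2L_{r+s}$ gives $d_r d_s = b\, g(r+s)$ for all $r,s \in \Gamma_1$ — in particular $d_r^2 = b\, g(2r)$, which forces $b$ to be a square; write $b = a^2$, and the square-root ambiguity in $a$ is exactly what the condition $\Ree(a) \ge 0$ normalizes. Imposing $[L_m,G_r] = (\tfrac m2 - r)G_{m+r}$ then pins down $d_{m+r}$ in terms of $d_r$ and $g(m)$ compatibly, and solving the resulting functional equations yields $d_r = a\, f(r)$ for a homomorphism $f \in \Gamma^{\lor}$ restricting to $g$ on $\Gamma_0$ (here one uses $\Gamma_1 + \Gamma_1 \subseteq \Gamma_0$, $\Gamma_1 + \Gamma_0 \subseteq \Gamma_1$). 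The constraint $b\Gamma_0 = \Gamma_0$ together with $\Gamma_1/b = \Gamma_1$ packages as $a^2\Gamma = \Gamma$. Finally one checks the converse: each $\varphi_{a,f}$ of the stated form respects all three bracket relations, and two choices of $a$ differing by sign give the same map on the even part but differ by $-1$ on the odd part — so the sign convention $\Ree(a)\ge 0$ (with a further tie-break when $\Ree(a)=0$, e.g. $\im(a) > 0$, which should be mentioned) makes the parametrization bijective.

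The main obstacle I anticipate is the bookkeeping in the superalgebra case: extracting that $b$ must be a perfect square and handling the resulting $\pm$ ambiguity cleanly, while simultaneously verifying that the scalars $d_r$ assemble into a homomorphism on all of $\Gamma_1$ consistently with $g$ on $\Gamma_0$. The functional equation $d_r d_s = a^2 g(r+s)$ does not by itself say $d_r = a f(r)$ for a homomorphism $f$ — one must combine it with the mixed relation $[L_m, G_r]$ to propagate the multiplicativity and to rule out sign inconsistencies around loops $r \to r + m \to r + m + m' \to \cdots$; degenerate cases (such as $r = s$, or $\tfrac m2 = r$ making a structure constant vanish, or $0 \in \Gamma_1$) need separate attention. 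A secondary subtlety worth flagging is well-definedness of the eigenspace argument when $\Gamma$ has elements $m$ with $m/a \notin \Gamma$: this is precisely why the constraint $a\Gamma = \Gamma$ (resp. $a^2\Gamma = \Gamma$) is forced rather than assumed, and the cleanest way to see it is to run the argument for $\varphi$ and $\varphi^{-1}$ in tandem.
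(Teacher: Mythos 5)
Your plan follows essentially the same route as the paper's proof. In part (i) you invoke \Cref{actingdiagonally} to get $\varphi(L_0)\in\CC L_0$, decompose $\mathcal W_\Gamma$ into $\ad(L_0)$-eigenspaces to force $\varphi(L_m)\in\CC L_{m/a}$ and $a\Gamma=\Gamma$, then turn the bracket into the functional equation $f(m+n)=f(m)f(n)$ for $m\neq n$ and extend it — the paper does this last extension via $f(0)=1$, $f(-z)=f(z)^{-1}$, $f(2z)=f(z)^2$, which is the argument you gesture at. In part (ii) you restrict to the even part, apply (i), determine $\varphi(G_r)=d_r G_{r/a^2}$ by the eigenspace argument, and extract $g\in\Gamma^\lor$ from $[G_r,G_s]$ and $[L_m,G_r]$; this is precisely the paper's argument with $d_r = b_r$ and $g(r)=b_r/a$. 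Two small remarks. First, your phrase ``forces $b$ to be a square'' is misleading (every element of $\CC^*$ is a square); the point is just that the stated parametrization uses $a$ with $a^2=a_0$, and the relations $f(m)g(r)=g(m+r)$, $g(r)g(s)=f(r+s)$ then let $f$ and $g$ assemble into a single $f'\in\Gamma^\lor$. Second, your observation that $\Ree(a)\ge 0$ alone does not pick a unique square root when $\Ree(a)=0$ is a genuine refinement the paper omits; it does not affect correctness of the set equality in the theorem (since $\varphi_{-a,f}=\varphi_{a,f\phi}$ with $\phi$ the parity character), but it is needed if one wants the map $(a,f)\mapsto\varphi_{a,f}$ to be injective.
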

\begin{proof}
     (i) Let $\varphi \in \Aut(\mathcal{W}_\Gamma)$. By \Cref{actingdiagonally} we must have that $\varphi(L_0) = aL_0$ for $a \in \CC^*$. As $\spec(\Ad(L_0)) = \Gamma$ we obtain $a\Gamma = \Gamma$. By \eqref{Wittbracket} for each $m \in \Gamma$ we have that for some $b_m \in \CC^*$,
    \[
    \varphi(L_m) = b_m L_{m/a}
    \]
    with $(m - n)b_{m + n} = (1/a)(m - n)b_m b_n$. Let us define $f(z) \defeq b_z/a$. Then $f: \Gamma \ra \CC^*$ has the properties $f(0) = 1$ and  
    \begin{equation}\label{functional equation for automorphisms}
        f(z_1 + z_2) = f(z_1)f(z_2) 
    \end{equation}
    for $z_1 \neq z_2$. Using the fact that $f(0) = 1$ we find 
    \[
    1 = f(z + (-z)) = f(z) f(-z) \implies f(-z) = f(z)^{-1}.
    \]
    With this in hand we see
    \[
    f(z) = f(2z + (-z)) = f(2z)f(-z)  \implies f(2z) = f(z)^2,
    \]
    which implies \eqref{functional equation for automorphisms} holds for all $z_1, z_2 \in \Gamma$. Hence, $f \in \Gamma^{\lor}$ as required. \\ \\
    (ii) Let $\varphi \in \aut(s\mathcal{W}_\Gamma)$. By (i) we know 
    \[
    \varphi(L_m) = a_0f(m)L_{m/a_0}
    \]
    for $f \in \Gamma_0^\lor$, $a_0 \in \CC^*$ such that $a_0 \Gamma = \Gamma$. Upon consideration of the commutator $[L_0, G_r] = rG_r$ we must have that for some $b_r \in \CC^*$,
    $$
    \varphi(G_r) = b_r G_{r/a_0}.
    $$
    Take $a \in \CC^*$ such that $a^2 = a_0$ and $\text{Re}(a) \geq 0$. Define a function $g: \Gamma_1 \ra \CC^*$ by $g(r) \defeq b_r/a$. By the commutation relations $[L_m, G_r]$ and $[G_r, G_s]$ one finds the properties of the function $g: \Gamma_1 \ra \CC$
    \begin{equation}\label{superautomorphisms}
         f(m) g(r)  = g(m + r), \; \; g(r) g(s) = f(r + s), \; m \in \Gamma_0, \; r, s \in \Gamma_1
    \end{equation}
    respectively. We may define the function $f'$ as $f'(m) = f(m)$ for $m \in \Gamma_0$, and $f'(r) = g(r)$ for $r \in \Gamma_1$. By \eqref{superautomorphisms} we necessarily have that $f' \in \Gamma^{\lor}$, and thus conclude the theorem. 
\end{proof}


\section{Modules}\label{modules section}


In this section we classify $\mathcal{W}_\Gamma$--modules (resp. $s\mathcal{W}_\Gamma$--modules) which when restricted to $\mathcal{U}(\fh)$ are free of rank 1. 

\begin{definition}
    Fix $\alpha \in \CC$, $f \in \Gamma^{\lor}$. View $\CC[L_0]$ as a $\mathcal{W}_\Gamma$--module with action 
    \[
    L_m P(L_0) = f(m)(L_0 + m \alpha)P(L_0 + m), \; \; P(L_0) \in \CC[L_0].
    \]
    We denote this module by $\Omega_\Gamma(f, \alpha)$. 
\end{definition}
For the classical Witt algebra $\mathcal{W}_\ZZ$, these modules were introduced in \cite{lu2014irreducible} and it was shown in \cite{tan2015wn+} that they constitute the only free rank 1 $\mathcal{U}(\fh)$--modules of $\mathcal{W}_\ZZ$. 

Next, we define the analogous modules of $s\mathcal{W}_\Gamma$. For $s\mathcal{W}_\ZZ$ (resp. $s\mathcal{W}_{\ZZ + 1/2}$) these modules were introduced in \cite{Yang2019}, and it was shown that they constitute the only $s\mathcal{W}_\ZZ$--modules (resp. $\mathcal{W}_{\ZZ + 1/2}$) such that the restriction on $\mathcal{U}(s\fh)$ is a free module of rank 1 (resp. $(1 \mid 1)$).
\begin{definition}
Fix $\alpha \in \CC$, $f \in \Gamma^{\lor}$. View $\CC[L_0] \oplus \xi \CC[L_0]$ as a $s\mathcal{W}_\Gamma$--module with actions,
\begin{align*}
    L_m(P(L_0) + \xi Q(L_0)) &= f(m)(L_0 + m\alpha)P(L_0 + m) + f(m)(L_0 + m(\alpha + 1/2))\xi Q(L_0 + m) \\
    G_r(P(L_0) + \xi Q(L_0)) &= f(r)(L_0 + 2r\alpha) Q(L_0 + r) + f(r)\xi P(L_0  + r)
\end{align*}
for $P(L_0) + \xi Q(L_0) \in \CC[L_0] \oplus \xi \CC[L_0]$. We denote this module by $s\Omega_\Gamma(f, \alpha)$. Note that  
\[
(s\Omega_\Gamma)_{\overline{0}} \cong \Omega_{\Gamma_0}(f_0, \alpha) \; \; \text{  and  } \; \; (s\Omega_\Gamma)_{\overline{1}} \cong \Omega_{\Gamma_0}(f_0, \alpha + 1/2)
\]
where $f_0$ is the restriction of $f$ to $\Gamma_0$.  
\end{definition}
$\Omega_\Gamma(f, \alpha)$ is defined as a $\mathcal{V}_\Gamma$--module with zero action of $C$, and similarly for $s\Omega_\Gamma(f, \alpha)$. 


\subsection{Theorem}\label{provingCactsbyzero}


\begin{proposition}\label{Omegaabaredifferent}
    For $f \neq f', \alpha \neq \alpha'$, $\Omega_\Gamma(f,\alpha) \ncong \Omega_\Gamma(f', \alpha')$. 
\end{proposition}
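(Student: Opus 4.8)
The plan is to show that an isomorphism $\phi\colon \Omega_\Gamma(f,\alpha) \xrightarrow{\sim} \Omega_\Gamma(f',\alpha')$ would force $f = f'$ and $\alpha = \alpha'$, by tracking how $\phi$ must interact with the $\mathcal{U}(\fh) = \CC[L_0]$-module structure and with the action of the $L_m$. First I would observe that $\phi$ is in particular an isomorphism of $\CC[L_0]$-modules (both sides are free of rank $1$), so $\phi$ is multiplication by a unit of $\CC[L_0]$; but the only units are nonzero scalars, so $\phi(P(L_0)) = c\,P(L_0)$ for some $c \in \CC^*$. In particular $\phi(1) = c$, i.e. $\phi$ sends the generator $1$ to a nonzero scalar multiple of the generator $1$.

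Next I would impose $\Gamma$-equivariance. Applying $\phi$ to $L_m \cdot 1 = f(m)(L_0 + m\alpha)$ and comparing with the computation of $L_m \cdot \phi(1) = c\, L_m \cdot 1$ in $\Omega_\Gamma(f',\alpha')$, we get
\[
c\, f(m)(L_0 + m\alpha) = c\, f'(m)(L_0 + m\alpha')
\]
as elements of $\CC[L_0]$, for every $m \in \Gamma$. Cancelling $c$ and comparing the two sides as polynomials in $L_0$: the leading coefficients give $f(m) = f'(m)$ for all $m$, hence $f = f'$; and then the constant terms give $m\alpha = m\alpha'$ for all $m \in \Gamma$, and since $1 \in \Gamma$ (so $\Gamma \neq 0$), this yields $\alpha = \alpha'$. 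This contradicts the hypothesis $f \neq f'$ (or $\alpha \neq \alpha'$), completing the argument.

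The only genuinely substantive point is the very first step — that any $\mathcal{W}_\Gamma$-module isomorphism is automatically $\CC[L_0]$-linear and therefore scalar multiplication. This is immediate here since $\CC L_0 = \fh \subset \mathcal{W}_\Gamma$, so a $\mathcal{W}_\Gamma$-morphism is a fortiori an $\fh$-morphism, hence a $\mathcal{U}(\fh)=\CC[L_0]$-morphism; freeness of rank $1$ then pins it down to multiplication by a unit of $\CC[L_0]$. I would also add a one-line remark that the statement is really "if $f\neq f'$ or $\alpha \neq \alpha'$ then the modules are non-isomorphic," and that the converse (genuine isomorphism when $f=f'$, $\alpha=\alpha'$) is trivial via the identity map; no automorphism-twisting is needed here because we are comparing the modules as plain $\mathcal{W}_\Gamma$-modules, not up to $\aut(\mathcal{W}_\Gamma)$.
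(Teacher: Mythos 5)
Your proposal is correct and follows essentially the same approach as the paper: reduce to the fact that a $\mathcal{W}_\Gamma$-isomorphism is in particular a $\CC[L_0]$-isomorphism of free rank-one modules, hence multiplication by a unit $c\in\CC^*$, and then compare the resulting actions of $L_m$ on the generator. You spell out the coefficient comparison (leading term gives $f=f'$, constant term with $m=1\in\Gamma$ gives $\alpha=\alpha'$) that the paper leaves implicit after asserting $\varphi(P(L_0))=cP(L_0)$.
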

\begin{proof}
    Assume that 
    \[
    \varphi: \Omega_\Gamma(f,\alpha) \ra \Omega_\Gamma(f',\alpha')
    \]
    is an isomorphism. Denote $1$ and $1'$ as the generators of $\CC[L_0]$--modules $\Omega_\Gamma(f, \alpha)$ and $\Omega_\Gamma(f', \alpha')$ respectively. As $\varphi$ is an isomorphism, we must have that $\varphi(1) = c1'$ for some $c \in \CC^*$. Then, $\varphi(L_01) = cL_01'$ and by induction $\varphi(P(L_0)) = cP(L_0)$. Therefore $f = f',$ $\alpha = \alpha'$ and we conclude the proposition.
\end{proof} 

\begin{corollaryy}
    \begin{enumerate}[i.]
    
        \item For $\Gamma \subset \CC \times \ZZ_2$, let $\phi: \Gamma \ra \ZZ_2$ be an element of $\Gamma^{\lor}$ defined by $\phi(a) = (-1)^{i}$ for $a \in \Gamma_{i}$. Then $s\Omega_\Gamma(f, \alpha) \cong s\Omega_\Gamma(f', \alpha')$ if and only if $\alpha' = \alpha$ and $f' = f$ or $f' = f \phi$. 
        
        \item $\Pi(s\Omega_\Gamma(f, \alpha))$ is not isomorphic to $s\Omega_\Gamma(f', \alpha')$ for any $f' \in \Gamma^{\lor}$, $\alpha' \in \CC$. 
        
    \end{enumerate} 
\end{corollaryy}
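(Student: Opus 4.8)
The plan is to deduce both parts from \Cref{Omegaabaredifferent} (really, from its proof) together with the parity decompositions $(s\Omega_\Gamma(f,\alpha))_{\eveng}\cong\Omega_{\Gamma_0}(f_0,\alpha)$ and $(s\Omega_\Gamma(f,\alpha))_{\oddg}\cong\Omega_{\Gamma_0}(f_0,\alpha+1/2)$ recorded right after the definition, where $f_0=f|_{\Gamma_0}$. Since a morphism of $s\mathcal W_\Gamma$-modules is even, any isomorphism between two such modules restricts to isomorphisms of the even parts and of the odd parts, both of which are modules over $(s\mathcal W_\Gamma)_{\eveng}=\mathcal W_{\Gamma_0}$, and it commutes with the action of $L_0\in\fh$.

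For (i), I would start from an isomorphism $\varphi\colon s\Omega_\Gamma(f,\alpha)\to s\Omega_\Gamma(f',\alpha')$. Restricting to the even parts and arguing exactly as in the proof of \Cref{Omegaabaredifferent} (now over $\Gamma_0$) gives $\alpha=\alpha'$ and $f_0=f'_0$. Next, since $L_0$ acts on each of $\CC[L_0]$ and $\xi\CC[L_0]$ as multiplication by $L_0$, these are free $\mathcal U(\fh)=\CC[L_0]$-modules of rank $1$ with generators $1,\xi$ (resp. $1',\xi'$ on the target), so the $\CC[L_0]$-linear bijection $\varphi$ must satisfy $\varphi(1)=c\,1'$ and $\varphi(\xi)=d\,\xi'$ for some $c,d\in\CC^*$. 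Applying $\varphi$ to the identities $G_r\cdot 1=f(r)\,\xi$ and $G_r\cdot\xi=f(r)(L_0+2r\alpha)$ in $s\Omega_\Gamma(f,\alpha)$ and comparing coefficients of $L_0$ with the corresponding actions in $s\Omega_\Gamma(f',\alpha')$ yields $f(r)d=cf'(r)$ and $cf(r)=df'(r)$ for all $r\in\Gamma_1$; hence $(c/d)^2=1$ and $f'|_{\Gamma_1}=\pm f|_{\Gamma_1}$. Combined with $f'|_{\Gamma_0}=f|_{\Gamma_0}$, this is precisely the statement $f'=f$ or $f'=f\phi$. For the converse, the case $f'=f$ is the identity, while for $f'=f\phi$ I would check directly that the map equal to $\id$ on $\CC[L_0]$ and to $-\id$ on $\xi\CC[L_0]$ intertwines all $L_m$- and $G_r$-actions: the $L_m$-compatibility is immediate because these operators preserve parity and $\phi\equiv 1$ on $\Gamma_0$, and the $G_r$-compatibility follows from $\phi(r)=-1$ for $r\in\Gamma_1$ together with $\alpha=\alpha'$.

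For (ii), by the definition of $\Pi$ the module $\Pi(s\Omega_\Gamma(f,\alpha))$ has even part $\cong\Omega_{\Gamma_0}(f_0,\alpha+1/2)$ and odd part $\cong\Omega_{\Gamma_0}(f_0,\alpha)$. If $\Pi(s\Omega_\Gamma(f,\alpha))\cong s\Omega_\Gamma(f',\alpha')$, then restricting to even parts and applying \Cref{Omegaabaredifferent} over $\Gamma_0$ forces $\alpha'=\alpha+1/2$, while restricting to odd parts forces $\alpha=\alpha'+1/2$; adding these gives $\alpha=\alpha+1$, a contradiction, so no such isomorphism exists.

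The one step that needs genuine care — and the natural candidate for the main obstacle — is the forward direction of (i): one must use the freeness of the even and odd parts over $\mathcal U(\fh)$ to pin $\varphi$ down to the two scalars $c,d$, and then invoke \emph{both} $G_r$-relations (equivalently, the fact that $f'$ is a homomorphism) to conclude $(c/d)^2=1$; a single relation would leave an undetermined scalar and a spurious family of ``isomorphisms''. Everything else is routine bookkeeping with the explicit action formulas.
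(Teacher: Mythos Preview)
Your argument for (i) is essentially the paper's: reduce to scalars via the $\mathcal U(\fh)$-freeness, then compare both $G_r$-relations to force $(c/d)^2=1$. You also supply the converse (the explicit isomorphism $\id\oplus(-\id)$ for $f'=f\phi$), which the paper leaves implicit.

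For (ii) your route differs from the paper's. The paper writes $\varphi(1)=c_1\xi'$, $\varphi(\xi)=c_2 1'$ and computes $\varphi(G_r\xi)=c_1 f(r)(L_0+2r\alpha)\xi'$ versus $G_r\varphi(\xi)=c_2 f'(r)\xi'$, obtaining a contradiction from the degree mismatch (linear versus constant in $L_0$). You instead apply \Cref{Omegaabaredifferent} separately to the even and odd parts of $\Pi(s\Omega_\Gamma(f,\alpha))$ and $s\Omega_\Gamma(f',\alpha')$, getting $\alpha'=\alpha+\tfrac12$ and $\alpha'+\tfrac12=\alpha$, an immediate contradiction. Your argument is shorter and uses only the parameter rigidity already established; the paper's has the minor advantage of being self-contained at the level of a single explicit computation, without invoking the odd-part identification. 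Both are correct.
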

\begin{proof}
    (i) Assume 
    $
    \varphi: s\Omega_\Gamma(f, \alpha) \ra s\Omega_\Gamma(f', \alpha')
    $
    is an isomorphism, and let us denote 
    \[
    f_0 \defeq f\big|_{\Gamma_0}, \; \; \; f'_0 \defeq f'\big|_{\Gamma_0}. 
    \] 
    We recall that 
    \[
    s\Omega_\Gamma(f, \alpha)_{\overline{0}} \cong \Omega_{\Gamma_0}(f_0, \alpha) \; \text{ and } \; s\Omega_\Gamma(f', \alpha')_{\overline{0}} \cong \Omega_{\Gamma_0}(f'_0, \alpha' + 1/2).
    \]
    Then by \Cref{Omegaabaredifferent} we immediately see that $\alpha = \alpha'$. Let $1$, $1'$, $\xi$, $\xi'$ be the generators of the free $\mathcal{U}(\fh)$--modules 
    \[
    s\Omega_\Gamma(f, \alpha)_{\overline{0}}, \; \;  s\Omega_\Gamma(f', \alpha)_{\overline{0}}, \; \; s\Omega_\Gamma(f, \alpha)_{\overline{1}},\; \;  s\Omega_\Gamma(f', \alpha)_{\overline{1}}
    \]
    respectively. As we assumed $s\Omega_\Gamma(f, \alpha) \cong s\Omega_\Gamma(f', \alpha)$ we necessarily have that 
    \[
    \varphi(1) = a_1 1', \; \; \; \varphi(\xi) = a_{\xi} \xi'
    \]
    for some $a_1, a_\xi \in \CC^*$. Then 
    \begin{align*}
        a_1 f'(r) \xi'= G_r\varphi(1) &= \varphi(G_r 1) = a_\xi f(r)\xi' \\
        a_\xi f'(r)(L_0 + 2r\alpha) 1' = G_r\varphi(\xi) &= \varphi(G_r \xi) = a_1 f(r)(L_0 + 2r\alpha)1'.
    \end{align*}
    This tells us
    \[
    a_1f'(r) = a_\xi f(r), \; \;  a_\xi f'(r) = a_1 f(r)
    \]
    which together implies $a_1^2 = a_\xi^2$. If $a_1 = a_\xi$ then $f(r) = f'(r)$, and if $a_1 = -a_\xi$, then 
    \[
    f'(r) = -f(r) = f(r)\phi(r).
    \]
    This concludes (i). 
    \\
    \\
    (ii) Assume 
    \[
    \varphi: \Pi(s\Omega_\Gamma(f, \alpha)) \ra s\Omega_\Gamma(f', \alpha')
    \]
    is an isomorphism. Let $1, \; 1', \; \xi, \; \xi'$ be the generators of the free $\CC[L_0]$--modules \[
    s\Omega_\Gamma(f, \alpha)_{\overline{0}},\; \; s\Omega_\Gamma(f', \alpha')_{\overline{0}}, \; \; s\Omega_\Gamma(f, \alpha)_{\overline{1}}, \; \; s\Omega_\Gamma(f', \alpha')_{\overline{1}}
    \]
    respectively. Then as $\varphi$ is an isomorphism we must have that 
    \[
    \varphi(1) = c_1 \xi', \; \; \varphi(\xi) = c_2 1'
    \]
    for some $c_1, c_2 \in \CC^*$. However, notice
    \[
    \varphi(G_r \xi) = \varphi(f(r)(L_0 + 2r\alpha)1) = f(r)(L_0 + 2r\alpha)c_1\xi', \; \; 
    G_r\varphi(\xi) = c_2 f'(r) \xi',
    \]
    so $\varphi(G_r \xi) \neq G_r \varphi(\xi)$, a contradiction. This concludes (ii).
\end{proof}

\begin{theorem}\label{maintheorem1}
    Let $M$ be a $\mathcal{V}_\Gamma$-module. If the restriction of $M$ to $\CC[L_0]$ is a free module of rank 1, then $C$ acts by zero and $M = \Omega_\Gamma(f, \alpha)$ for some $f \in \Gamma^{\lor}$, $\alpha \in \CC$. 
\end{theorem}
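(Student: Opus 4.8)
The plan is to use the freeness hypothesis to replace the module structure by a system of polynomial identities, and then to read off everything from one finite-difference equation. Fix a free generator $v$ of $M$ over $\CC[L_0]$, so $M=\CC[L_0]v$. From \eqref{Virasorobracket} one has $L_mP(L_0)=P(L_0+m)L_m$ in $\mathcal{U}(\mathcal{V}_\Gamma)$, so the whole action is encoded by the polynomials $g_m(L_0)\defeq L_m v$ and $c(L_0)\defeq Cv$ through $L_m(P(L_0)v)=P(L_0+m)g_m(L_0)v$. Applying \eqref{Virasorobracket} to $v$ and writing $t=L_0$, the defining relations become the polynomial identities
\[
g_n(t+m)g_m(t)-g_m(t+n)g_n(t)=(m-n)g_{m+n}(t)\quad(m+n\neq0),
\]
together with $g_{-m}(t+m)g_m(t)-g_m(t-m)g_{-m}(t)=2mt+\tfrac{m^3-m}{12}c(t)$ and $c(t+m)g_m(t)=g_m(t)c(t)$.

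Two quick reductions come first. No $g_m$ vanishes: if $g_{m_0}=0$, the bracket of $L_{m_0}$ with $L_n$ forces $g_{m_0+n}=0$ for all admissible $n$, so $g_k=0$ for all $k\in\Gamma$ outside at most two values; since $\Gamma\supseteq\ZZ$ is infinite, a single further iteration gives $g_1=g_{-1}=0$, contradicting $g_{-1}(t+1)g_1(t)-g_1(t-1)g_{-1}(t)=2t$ (the $m=1$ central term is zero). Since then $g_1\neq0$, the identity $c(t+1)g_1(t)=g_1(t)c(t)$ in the domain $\CC[t]$ gives $c(t+1)=c(t)$, so $c\equiv c_0$ is a scalar. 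Finally no $g_m$ is a nonzero constant: if $g_{m_0}=\kappa\in\CC^*$, the same bracket relation gives $\kappa\bigl(g_m(t)-g_m(t+m_0)\bigr)=(m-m_0)g_{m+m_0}(t)$, whence $\deg g_{m+m_0}\le\deg g_m-1$, and iterating along $m_0+\ZZ\subseteq\Gamma$ drives a degree below $0$. Hence $\deg g_m\ge1$ for every $m$.

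The crux is the observation that $P_m(t)\defeq g_m(t-m)g_{-m}(t)$ satisfies
\[
P_m(t+m)-P_m(t)=2mt+\tfrac{m^3-m}{12}c_0,
\]
because $g_{-m}(t+m)g_m(t)=g_m(t)g_{-m}(t+m)=P_m(t+m)$. The right-hand side has degree exactly $1$ (its leading coefficient is $2m\neq0$), and the shift-by-$m$ difference of a polynomial of degree $D\ge1$ has degree exactly $D-1$; therefore $\deg P_m=2$, i.e. $\deg g_m+\deg g_{-m}=2$ for every $m\neq0$. Together with $\deg g_m\ge1$ this forces $\deg g_m=1$ for all $m$ (and of course $g_0(t)=t$). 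Write $g_m(t)=a_m(t+\beta_m)$ with $a_m\in\CC^*$, $\beta_m\in\CC$, $a_0=1$, $\beta_0=0$.

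It remains to pin down the parameters. Comparing coefficients in the identities above yields $a_{m+n}=a_ma_n$ ($m\neq n$), $a_ma_{-m}=1$, $m\beta_m-n\beta_n=(m-n)\beta_{m+n}$ ($m\neq n$), and $\beta_m+\beta_{-m}=\tfrac{m^2-1}{12}c_0$; a short bootstrap (e.g. $a_{2m}=a_{m+n_0}a_{m-n_0}$ for a suitable $n_0$) upgrades the $a$-relations to $a=:f\in\Gamma^{\lor}$. For $\beta$, set $\alpha\defeq\beta_1$: from $\beta_1+\beta_{-1}=0$ and the linear recursion one proves $\beta_n=\alpha n$ for all $n\in\ZZ$ by induction, and for arbitrary $m\in\Gamma$ the quantity $\beta_m-\alpha m$ must vanish because the $n=1$ and $n=2$ instances of $m\beta_m-n\beta_n=(m-n)\beta_{m+n}$ express $\beta_{m+2}-\alpha(m+2)$ in terms of $\beta_m-\alpha m$ in two incompatible ways unless $\beta_m-\alpha m=0$. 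Thus $g_m(t)=f(m)(t+\alpha m)$, which is precisely the action defining $\Omega_\Gamma(f,\alpha)$, so $M\cong\Omega_\Gamma(f,\alpha)$ as $\mathcal{W}_\Gamma$-modules; substituting this explicit form into the $m=2$ case of $P_m(t+m)-P_m(t)=2mt+\tfrac{m^3-m}{12}c_0$ forces $c_0=0$, so $C$ acts by zero and $M=\Omega_\Gamma(f,\alpha)$ as $\mathcal{V}_\Gamma$-modules. The step I expect to be the main obstacle is this last one — solving the $\beta$-recursion over a general subgroup $\Gamma$ rather than just $\ZZ$, where one must exclude solutions "supported away from" $\ZZ$ — whereas the degree-collapse identity for $P_m$ is the single idea that makes the rest essentially bookkeeping.
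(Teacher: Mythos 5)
Your proof is correct and follows essentially the same route as the paper: encode the action by polynomials $q_m(L_0)=L_mv$, use centrality to see $C$ acts by a scalar, exploit the identity satisfied by $q_m(t-m)q_{-m}(t)$ to force $\deg q_m+\deg q_{-m}=2$, rule out constants, read off $f\in\Gamma^{\lor}$ and $\alpha$ from the resulting linear-polynomial relations, and finally deduce $K=0$ by substituting back. The only differences are cosmetic — you establish $\deg q_m\ge 1$ before the degree-two identity rather than after, and you parametrize $q_m(t)=a_m(t+\beta_m)$ instead of $\lambda_m(t+m\alpha_m)$.
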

\begin{proof}
    Let $M$ be a $\mathcal{V}_\Gamma$ module as described, and identify $M$ with $\CC[L_0]$ as a vector space. For each $m \in \Gamma$ let us define 
    \[
     L_m 1 \defeq q_m(L_0).
    \]
    Through \eqref{Virasorobracket} we obtain 
    $
    L_mL_0 P(L_0) = (L_0 + m) L_m P(L_0),
    $
    which in particular gives us the recursive relation
\begin{equation}\label{second}
    L_mL_0^j = (L_0 + m)^j q_m(L_0).
\end{equation}
    From \eqref{second} it follows that 
\begin{equation}
    L_mP(L_0) = q_m(L_0)P(L_0 + m), \; \; \forall P(L_0) \in \CC[L_0]. 
\end{equation}
    Next, let us show that $C$ must act by a constant. Define 
    \[
    P_C(L_0) \defeq C 1,
    \]
    and note that as $C$ is in the center we have 
    $
    C L_0 = C(L_0 1) = L_0 P_C(L_0).
    $
    By induction this tells us $C L_0^m = L_0^mP_C(L_0)$. So,
    $$
    CP(L_0) = P(L_0)P_C(L_0), \; \; \forall P(L_0) \in \CC[L_0].  
    $$
    Let us fix a polynomial $P(L_0) \in \CC[L_0]$. We notice that $L_m (C P(L_0)) = C(L_m P(L_0))$ implies 
    $$
    q_m(L_0)P(L_0+m) P_C(L_0 + m) = q_m(L_0) P(L_0 + m) P_C(L_0)$$ 
    so $P_C(L_0) = K$ for some constant $K$. With this in hand, we can show that $q_m(L_0)$ is a linear polynomial. Notice that from \eqref{Virasorobracket} we find the relation
    \begin{equation}\label{relation on qm}
    q_m(L_0) q_n(L_0 + m) - q_n(L_0)q_m(L_0 + n) = (m-n)q_{m + n}(L_0) + \frac{m^3 - m}{12}K \delta_{m + n, 0}.
    \end{equation}
    Define $Q(L_0) \defeq q_m(L_0)q_{-m}(L_0 + m)$. Then by \eqref{relation on qm} we have 
    $$ Q(L_0) - Q(L_0 - m) = 2mL_0 + \frac{(m^3-m)K}{12}$$ 
    showing that $Q(L_0)$ is at most quadratic, and so $q_m(L_0)$ is at most quadratic. Assume for the sake of contradiction that for some $m$, $q_m(L_0)$ is a constant polynomial. Then from \eqref{relation on qm} either $\deg(q_{m + n}) < \deg(q_n)$ for all $n \neq \pm m$ or $q_{m+n} = 0$. In particular, we see $\deg(q_{3m + 1}) < \deg(q_{2m + 1}) < \deg(q_{m + 1}) < \deg(q_1) \leq 2$. However, this implies that $q_{3m + 1} = 0$ and thus $q_j = 0$ for all $j$, a contradiction. Therefore, it cannot be the case that $q_m(L_0)$ was a constant, and we conclude that $q_m(L_0)$ is a degree one polynomial, i.e.
    \[
    q_m(L_0) = \lambda_m(L_0 + m\alpha_m)
    \]
    for $\lambda_m \in \CC^*, \alpha_m \in \CC$ with $\lambda_0 = 1, \alpha_0 = 0$. Substituting this into \eqref{relation on qm} we find 
    \begin{equation}\label{new relation on qm}
    \lambda_m \lambda_n ((m-n)L_0 + m^2 \alpha_m - n^2 \alpha_n) = (m-n)\lambda_{m + n}(L_0 + (m+n)\alpha_{m + n}) + \frac{m^3 - m}{12}K \delta_{m + n, 0}.    
    \end{equation}
    By \eqref{new relation on qm} we have the following relation on $\lambda_m$,
    \begin{equation*}\label{lambda relation}
        (m-n)\lambda_{m + n} = (m -n)\lambda_m\lambda_n,
    \end{equation*}
    and we may argue as before in the proof of \Cref{witautomorphisms} to find that $f(m) \defeq \lambda_m \in \Gamma^{\lor}$, as required. We will now show that $K = 0$. Through \eqref{new relation on qm}, one can easily find that
    \[
    \alpha_{-1} = \alpha_1, \; \alpha_2 =  \alpha_1, \; \alpha_{-2} = \alpha_1
    \]
    by taking the values $m = 1, 2, -2$ with $n = -1, -1, 1$ respectively. Then let $m = 2, n = -2$ in \eqref{new relation on qm} to conclude
    $
    K = 0.
    $
    So by \eqref{new relation on qm} we must have the following relation on $\alpha_m$, 
    \begin{equation}\label{alpha relation}
        m^2 \alpha_m - n^2 \alpha_n = (m^2 - n^2)\alpha_{m +n}.
    \end{equation}
    Taking $m = -n$ in \eqref{alpha relation} we find $\alpha_n = \alpha_{-n}$. Next, using this fact and choosing values $m+n, -n$ in \eqref{alpha relation} we see that 
    \begin{equation}\label{second alpha relation}
    (m+n)^2 \alpha_{m + n} - n^2\alpha_{-n} = (m^2 + 2mn)\alpha_m.
    \end{equation}
    Combining \eqref{second alpha relation} and \eqref{alpha relation} we conclude $\alpha_m = \alpha_n$. Therefore, $q_m(L_0) = f(m)(L_0 + \alpha_1 m)$ and $M \cong \Omega_\Gamma(f, \alpha_1)$. 
\end{proof}


\begin{theorem} 
Let $M = M_{\overline{0}} \oplus M_{\overline{1}}$ be a $s\mathcal{V}_\Gamma$-module. If the restriction of $M$ to $\CC[L_0] \oplus \xi\CC[L_0]$ is free of rank $(1 \mid 1)$, then $C$ acts by zero and 
\[
M \cong s\Omega_\Gamma(f, \alpha) \text{ or } M \cong \Pi(s\Omega_\Gamma(f, \alpha))
\]
\end{theorem}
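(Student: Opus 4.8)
The plan is to reduce to \Cref{maintheorem1} on the even subalgebra and then determine the odd generators from the bracket relations. Since $(s\mathcal{V}_\Gamma)_{\overline 0}=\mathcal{V}_{\Gamma_0}$ and $1\in\Gamma_0$, and since the hypothesis makes $M_{\overline 0}$ and $M_{\overline 1}$ each free of rank $1$ over $\CC[L_0]$, applying \Cref{maintheorem1} to each shows that $C$ acts by zero on all of $M$ and that there are generators $v_0\in M_{\overline 0}$, $v_1\in M_{\overline 1}$ with $L_m v_0 = f_0(m)(L_0+m\alpha)v_0$ and $L_m v_1=f_1(m)(L_0+m\beta)v_1$ for some $f_0,f_1\in\Gamma_0^{\lor}$ and $\alpha,\beta\in\CC$. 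Assuming, as we may, that $\Gamma_1\neq\emptyset$, the set $\Gamma_1$ is a coset of $\Gamma_0$ and hence infinite (as $1\in\Gamma_0$). From $[L_0,G_r]=-rG_r$ one gets $G_r\,P(L_0)=P(L_0+r)\,G_r$ for all $P$, so the odd action is recorded by polynomials $a_r,b_r\in\CC[L_0]$ via $G_r v_0=a_r(L_0)v_1$ and $G_r v_1=b_r(L_0)v_0$.

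I would first fix the degrees of these polynomials. Since $C=0$, the relation $[G_r,G_r]=2L_{2r}$ becomes $G_r^2=L_{2r}$; applied to $v_0$ it gives $a_r(L_0+r)b_r(L_0)=f_0(2r)(L_0+2r\alpha)$, so $a_r,b_r$ are nonzero with $\deg a_r+\deg b_r=1$ for all $r\ne 0$. To see $\deg a_r$ is independent of $r$, apply $[G_r,G_s]=2L_{r+s}$ to $v_0$ for $r+s\ne 0$: the identity $a_s(L_0+r)b_r(L_0)+a_r(L_0+s)b_s(L_0)=2f_0(r+s)(L_0+(r+s)\alpha)$ has a right-hand side of degree $1$, while its left-hand side acquires a degree-$2$ term as soon as $\deg a_r\ne\deg a_s$. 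Since $\Gamma_1$ has at least three elements, the only remaining case $s=-r$ is excluded by bringing in a third element of $\Gamma_1$. Hence $\deg a_r$ is constant on $\Gamma_1$, and — after replacing $M$ by $\Pi(M)$ and relabeling, which swaps the roles of $a_r$ and $b_r$ — we may assume $a_r=c_r\in\CC^*$ is constant and $b_r$ linear for every $r$.

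With $a_r$ constant, $G_r^2 v_0=L_{2r}v_0$ forces $b_r(L_0)=\tfrac{f_0(2r)}{c_r}(L_0+2r\alpha)$, and comparing this with $G_r^2 v_1=L_{2r}v_1$ forces $f_1(2r)=f_0(2r)$ and $\beta=\alpha+\tfrac{1}{2}$. Reading $[L_m,G_r]v_0=(\tfrac{m}{2}-r)G_{m+r}v_0$ in powers of $L_0$ then gives $f_1=f_0$ on all of $\Gamma_0$ (from the $L_0$-coefficient) and the recursion $c_{m+r}=f_0(m)c_r$ whenever $m/2\ne r$ (from the constant term, using $\beta-\alpha=\tfrac{1}{2}$); a short bootstrap (writing $2r=4r+(-2r)$, etc.) removes the restriction $m/2\ne r$, so $c_s=f_0(s-r)c_r$ for all $r,s\in\Gamma_1$. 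Finally, since $\CC^*$ is divisible and $[\Gamma:\Gamma_0]=2$, the homomorphism $f_0$ extends to some $f\in\Gamma^{\lor}$; fixing $\rho\in\Gamma_1$ and defining $\varphi(v_0)=v_0'$, $\varphi(v_1)=\tfrac{f(\rho)}{c_\rho}v_1'$ onto the standard generators $v_0',v_1'$ of $s\Omega_\Gamma(f,\alpha)$, the relations above show $\varphi$ intertwines all $L_m$ and $G_r$, hence is an isomorphism. Thus $M\cong s\Omega_\Gamma(f,\alpha)$; undoing the possible parity switch yields $M\cong s\Omega_\Gamma(f,\alpha)$ or $M\cong\Pi(s\Omega_\Gamma(f,\alpha))$.

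I expect the degree argument of the second paragraph to be the main obstacle — proving that $\deg a_r$ cannot depend on $r$ (so that $M$ is an $s\Omega_\Gamma$ or its parity flip rather than some hybrid), including the $r+s=0$ case where the quadratic cancellation in $[G_r,G_s]$ is not immediately visible. The remaining work is bookkeeping: carrying $f_0,f_1,\alpha,\beta$ through the relations and noticing that only the extendability of $f_0$ to $\Gamma^{\lor}$, not a pointwise equality $c_r=f(r)$, is needed to build the isomorphism.
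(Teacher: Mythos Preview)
Your proof is correct and follows essentially the same route as the paper's: reduce each graded piece to \Cref{maintheorem1}, record the odd action by polynomials $a_r,b_r$, use $G_r^2=L_{2r}$ and $[G_r,G_s]=2L_{r+s}$ to pin down their degrees, and then read off $f_1=f_0$, $\beta=\alpha+\tfrac12$ and the recursion on $c_r$ from $[L_m,G_r]$. Your degree argument (including the $r+s=0$ bridging step) is in fact more explicit than the paper's, which simply asserts that the linear/constant dichotomy ``must hold for each $r$'' from the mixed anticommutator identity.

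The one genuine difference is in how the extension to $\Gamma^\lor$ is produced. The paper rescales the odd generator $\xi$ at the outset so that $p_{r_1}$ and $q_{r_1}$ share a leading coefficient; after the recursion this forces $a_r=c_r$ for all $r$, and then $g(r):=a_r$ is \emph{literally} an element of $\Gamma^\lor$ extending $f_0$, so $M$ is $s\Omega_\Gamma(g,\alpha)$ on the nose. You instead keep the generators fixed, invoke divisibility of $\CC^*$ to pick an abstract extension $f\in\Gamma^\lor$, and then absorb the discrepancy $f(\rho)/c_\rho$ into the isomorphism. Both are valid; the paper's normalization is slightly slicker bookkeeping, while your argument makes transparent that only the extendability of $f_0$ matters, not any particular choice of extension.
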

\begin{proof}
Let $M$ be a module as described above. As both $M_{\overline{0}}$ and $M_{\overline{1}}$ are necessarily modules of $\mathcal{V}_\Gamma$, we know that $C$ must act by zero and 
\[
M_{\overline{0}} = \Omega_{\Gamma_0}(f, \alpha),\; \;  M_{\overline{1}} = \Omega_{\Gamma_0}(f', \alpha')
\]
for some $\alpha,$ $ \alpha' \in \CC$, $f,$ $ f' \in \Gamma_{0}^{\lor}$. Denote by 1 the generator of a free $\CC[L_0]$--module $M_{\overline{0}}$, and by $\xi$ the generator of a free $\CC[L_0]$--module $M_{\overline{1}}$. For each $r \in \Gamma_1$ define 
\[
G_r1 \defeq p_r(L_0) \xi, \; \; G_r \xi \defeq q_r(L_0) 1. 
\]
Using the commutator $[L_0, G_r]$ one can easily show that 
\[
G_r(P(L_0) + \xi Q(L_0)) = q_r(L_0) Q(L_0 + r) + p_r(L_0) \xi P(L_0 + r).
\]
We note that by substituting $\xi$ with $\xi' \defeq c \xi$ for $c \in \CC^*$, we obtain $p'_r(L_0) = p_r(L_0)/c$ and $q'_r(L_0) = cq_r(L_0)$. Let us choose $c$ in such a way that the polynomials $p_{r_1}(L_0)$ and $ q_{r_1}(L_0)$ have the same leading coefficient for some specific $r_1 \in \Gamma_1$. \\
\\
Next, fix polynomials $P(L_0) \in M_{\overline{0}}$, $Q(L_0) \in M_{\overline{1}}$. Using $[G_r, G_s]P(L_0) = 2L_{r+s}P(L_0)$ we find the relations
\begin{align}
q_r(L_0) p_s(L_0 + r) + q_s(L_0)p_r(L_0 + s) &= 2f(r+s)(L_0 + (r+s)\alpha), \label{randsrelationshipanticommutator} \\
 q_r(L_0)p_r(L_0 + r) &= f(2r)(L_0 + 2r\alpha). \label{r=srelationshipanticommutator}  
\end{align}
and the commutators $[L_m, G_r] = (m/2 - r)G_{m+r}$ give the relations
\begin{align}
p_r(L_0)f'(m)(L_0 + m\alpha') - p_r(L_0) f(m)(L_0 + r + m\alpha) &= \left(\frac{m}{2} - r \right) p_{m+r}(L_0), \label{evengrlm}\\
q_r(L_0)f(m)(L_0 + m \alpha) - q_r(L_0) f'(m)(L_0 + r + \alpha' m) &= \left(\frac{m}{2} - r\right) q_{r + m}(L_0) \label{svone}
\end{align}
for $P(L_0)$ and $\xi Q(L_0)$, respectively. From \eqref{svone} we immediately see that the coefficient of the linear term  must equal zero, giving us
$
f'(m) = f(m), \; \forall \; m \in \Gamma_0. 
$ 
Furthermore, by \eqref{r=srelationshipanticommutator} it must be the case that one of $q_r(L_0),\; p_r(L_0)$ is linear and the other is a constant. By \eqref{randsrelationshipanticommutator} we know that this choice must hold for each $r \in \Gamma_1$. Let us first choose
\begin{equation}\label{qrandpr}
p_r(L_0) = c_r,~q_r(L_0) = a_r(L_0 + b_r),\; \; a_r, c_r \in \CC^*, \; b_r \in \CC.
\end{equation}
Substituting \eqref{qrandpr} into \eqref{r=srelationshipanticommutator} we find that $c_r = f(2r)/a_r$, $b_r = 2 r \alpha$ for all $r \in \Gamma_1$. This allows us to re-write \eqref{qrandpr} as 
\begin{equation}\label{newqrandpr}
p_r(L_0) = \frac{f(2r)}{a_r}, \; \; q_r(L_0) = a_r(L_0 + 2 r \alpha). 
\end{equation}
Let us combine formulae \eqref{newqrandpr} and \eqref{svone}. Through the linear term we find
\begin{equation*}
A = f(m)a_r(m\alpha + m - m\alpha' - r) = \left(\frac{m}{2} - r \right) a_{m + r} = B
\end{equation*}
and for the constant term 
\begin{equation*}
C = f(m)a_r(m^2\alpha + 2rm\alpha^2) - f(m)a_r(2r\alpha\alpha' m + 2r^2\alpha) = \left(\frac{m}{2} - r \right)a_{m + r}(2(m+r)\alpha) = D 
\end{equation*}
where we have added the notation $A, B, C, D$ for clarity. We next consider the equation $C/A = D/B$, and through this we find the result $\alpha' = \alpha + 1/2$.\\
\\
Finally, from \eqref{randsrelationshipanticommutator}, \eqref{evengrlm}, and \eqref{svone} we have 
\begin{equation}\label{all the equations}
 a_rc_s + a_sc_r = 2f(r + s), \; f(m) c_r = c_{r+m},\; f(m)a_r = a_{m + r}, 
\end{equation}
for all $m \in \Gamma_0, r \in \Gamma_1$. In particular, we see 
\[
\frac{c_{r+m}}{c_r} = \frac{a_{m + r}}{a_r}. 
\]
As $a_{r_1} = c_{r_1}$ we obtain that $a_r = c_r$ for every $r \in \Gamma_1$. Thus, we may define the function $g: \Gamma \ra \CC$ by
\begin{align*}
g(m) &\defeq f(m), \; m \in \Gamma_0, \\
g(r) &\defeq a_r, \; r \in \Gamma_1
\end{align*}
and from \eqref{all the equations} we conclude $g \in \Gamma^{\lor}$, as required. Therefore, the action of $G_r$ on $P(L_0) + \xi Q(L_0)$ is given by 
\[
G_r(P(L_0) + \xi Q(L_0)) = g(r)(L_0 + 2r\alpha)Q(L_0 + r) + g(r)\xi P(L_0 + r)
\]
and 
$M \cong s\Omega_\Gamma(g, \alpha)$. If instead of \eqref{qrandpr} we take the choice of 
\[
p_r(L_0) = a_r(L_0 + b_r), \; \; q_r(L_0) = c_r, \; \; a_r, c_r \in \CC^*, \; \;  b_r \in \CC
\]
for $r \in \Gamma_1$, we may follow an identical argument to conclude $M \cong \Pi(s\Omega_\Gamma(f, \alpha))$. This concludes the theorem.
\end{proof}



\subsection{Twisted module}


In this section we consider the modules $\Omega_\Gamma(f, \alpha)$ and $s\Omega_\Gamma(f, \alpha)$ after a twist by an automorphism. 

\begin{proposition}\label{twisted module}
    \begin{enumerate}[i.] 
    
     \item Let $\varphi_{a, f} \in \aut(\mathcal{W}_\Gamma)$, and let $\Omega_\Gamma(g, \alpha)^{a, f}$ denote $\Omega_\Gamma(g, \alpha)$ after a twist by $\varphi_{a, f}$. Then $\Omega_\Gamma(g, \alpha)^{a, f} = \Omega_\Gamma(g', \alpha)$ where $g'(m) \defeq f(m)g(m/a)$. 
     
     \item Let $\varphi_{a, f} \in \aut(s\mathcal{W}_\Gamma)$, and let $s\Omega_\Gamma(g, \alpha)^{a, f}$ denote $s\Omega_\Gamma(g, \alpha)$ after a twist by $\varphi_{a, f}$. Then $s\Omega_\Gamma(g, \alpha)^{a, f} = s\Omega(g', \alpha)$ where $g'(m) \defeq f(m)g(m/a^2)$. 
     
    \end{enumerate}
\end{proposition}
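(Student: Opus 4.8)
The plan is to prove both parts by exhibiting an explicit isomorphism; since the statement asserts an equality (really an isomorphism) of modules, a concrete intertwiner is the cleanest route. First I would fix the twist convention: for $\varphi\in\aut(\fg)$ and a $\fg$-module $M$, the twisted module $M^{\varphi}$ has the same underlying (super) space with action $x\cdot_\varphi v:=\varphi(x)v$. Because $f(0)=1$, \Cref{witautomorphisms} gives $\varphi_{a,f}(L_0)=aL_0$ in the Witt case and $\varphi_{a,f}(L_0)=a^{2}L_0$ in the super case, so the twist merely rescales the action of the Cartan by a nonzero scalar; in particular $\Omega_\Gamma(g,\alpha)^{a,f}$ (resp. $s\Omega_\Gamma(g,\alpha)^{a,f}$) is again free of rank $1$ (resp. $(1\mid 1)$) over $\mathcal U(\fh)$ (resp. $\mathcal U(s\fh)$). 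Thus, by \Cref{maintheorem1} (resp. its superalgebra counterpart), the twisted module is already known to be some $\Omega_\Gamma(g',\alpha')$ (resp. $s\Omega_\Gamma(g',\alpha')$ or $\Pi$ of it), and all that remains is to determine $g'$ and $\alpha'$.

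For (i) I would define $\Phi\colon\Omega_\Gamma(g',\alpha)\to\Omega_\Gamma(g,\alpha)^{a,f}$ by the substitution $\Phi\big(P(L_0)\big):=P(aL_0)$, which is evidently a $\CC$-linear bijection. Checking that $\Phi$ intertwines the $L_m$-actions is a one-line computation: expand $L_m\cdot_\varphi\Phi(P)=a f(m)\,L_{m/a}\!\cdot P(aL_0)$ using the defining formula for the action of $L_{m/a}$ on $\Omega_\Gamma(g,\alpha)$, and observe that shifting the argument of $P(aL_0)$ by $m/a$ shifts the argument of $P$ by $a\cdot(m/a)=m$; the scalar $a$ then combines with $L_0+(m/a)\alpha$ into $aL_0+m\alpha$, so the result is $f(m)g(m/a)(aL_0+m\alpha)P(aL_0+m)=\Phi\big(L_mP(L_0)\big)$ as soon as one sets $g'(m):=f(m)g(m/a)$ and $\alpha'=\alpha$. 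That $g'\in\Gamma^{\lor}$ is automatic since $a\Gamma=\Gamma$.

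For (ii) the same idea works, with one twist: $\varphi_{a,f}(L_m)=a^{2}f(m)L_{m/a^{2}}$ carries two powers of $a$ while $\varphi_{a,f}(G_r)=a f(r)G_{r/a^{2}}$ carries only one, so the substitution $L_0\mapsto a^{2}L_0$ alone cannot intertwine the $G_r$-action — the odd generator must be rescaled too. I would set $\Phi\big(P(L_0)+\xi Q(L_0)\big):=P(a^{2}L_0)+a\,\xi\,Q(a^{2}L_0)$. The $L_m$-verification proceeds as in (i) (the factor on $\xi$ cancels), giving $g'(m)=f(m)g(m/a^{2})$ for $m\in\Gamma_0$. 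For $G_r$, I would compare the $1$-component and the $\xi$-component of $G_r\cdot_\varphi\Phi(P+\xi Q)$ with those of $\Phi\big(G_r\cdot(P+\xi Q)\big)$; with the rescaling factor $c$ on $\xi$ left general, the two components yield $g'(r)=(c/a)f(r)g(r/a^{2})$ and $g'(r)=(a/c)f(r)g(r/a^{2})$, hence $c^{2}=a^{2}$, and the choice $c=a$ (consistent with the normalization $\Ree(a)\ge 0$) gives $g'(r)=f(r)g(r/a^{2})$ for $r\in\Gamma_1$. Hence $g'$ is the single homomorphism $m\mapsto f(m)g(m/a^{2})$ on all of $\Gamma$ and $\alpha'=\alpha$; the other root $c=-a$ gives $g'=\phi\cdot\big(m\mapsto f(m)g(m/a^{2})\big)$, which defines an isomorphic module by the corollary to \Cref{Omegaabaredifferent}, so nothing is lost.

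I do not expect a conceptual obstacle here — the computations are of the routine bookkeeping kind. The only point needing care is the super case: one must keep track of the differing powers of $a$ in $\varphi_{a,f}(L_m)$ and $\varphi_{a,f}(G_r)$ and realize that the odd generator has to be rescaled by exactly $a$ for $\Phi$ to be an intertwiner; overlooking this makes it appear, wrongly, that $\Phi$ fails to commute with the $G_r$.
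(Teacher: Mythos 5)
Your proposal is correct and takes essentially the same approach as the paper: both exhibit an explicit substitution intertwiner, $P(L_0)\mapsto P(aL_0)$ in the Witt case and $P(L_0)+\xi Q(L_0)\mapsto P(a^2L_0)+a\xi Q(a^2L_0)$ in the super case (the paper writes the inverse substitution $L_0\mapsto L_0'/a$, resp. $L_0\mapsto L_0'/a^2$ and $\xi\mapsto \xi'/a$, but the computation is the same). Your explicit discussion of the sign ambiguity $c=\pm a$ for the rescaling of $\xi$, and its reconciliation via the corollary to \Cref{Omegaabaredifferent}, is a small clarification not spelled out in the paper but does not change the argument.
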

\begin{proof}
    (i) \; We note that $\Omega_\Gamma(g, \alpha)^{a, f}$ must be of the form $\Omega_\Gamma(g', \alpha')$ for some $\alpha' \in \CC$, $g' \in \Gamma^{\lor}$. Next, consider the map
    \[
    \psi: P(L_0) \mapsto P(L_0'/a). 
    \]
    Using the fact that $\varphi_{a, f}(L_m)\psi(P(L_0)) = \psi(\varphi_{a, f}(L_m)P(L_0))$ it is easy to see 
    \begin{align*}
    \varphi_{a,f}(L_m)P\left(\frac{L_0'}{a}\right) = f(m)g(m/a)(L_0' + m\alpha)P\left(\frac{L_0' + m}{a}\right).
    \end{align*}
    Defining $g'(m) \defeq f(m)g(m/a)$ we conclude (i). \\
    \\
    (ii) \; We again have that $s\Omega_\Gamma(g, \alpha)^{a, f}$ must be of the form $s\Omega_\Gamma(g', \alpha')$ for some $\alpha' \in \CC$, $g' \in \Gamma^{\lor}$. Fix $P(L_0) \in (s\Omega_\Gamma)_{\overline{0}}, \; \xi Q(L_0) \in (s\Omega_\Gamma)_{\overline{1}}$, and let us consider the map 
    \[
    \psi: P(L_0) + \xi Q(L_0) \mapsto P(L_0'/a^2) + (\xi'/a)Q(L_0'/a^2).
    \]
    Using an identical method to that in (i), one finds 
    \begin{align*}
       \varphi_{a,f}(G_r)P\left(\frac{L_0'}{a^2}\right) &= f(r)g\left(\frac{r}{a^2}\right) \xi ' P\left(\frac{L_0' + r}{a^2}\right) \\
       \varphi_{a,f}(G_r)\frac{\xi'}{a}Q\left(\frac{L_0'}{a^2}\right) &= \frac{1}{a}f(r)g\left(\frac{r}{a^2}\right)(L_0' + 2r\alpha)Q\left(\frac{L_0' + r}{a^2}\right). 
    \end{align*}
    Finding the action $\varphi_{a, f}(L_m) \psi(P(L_0) + \xi Q(L_0))$ is done in an identical manner. Defining $g'(m) \defeq f(m)g(m/a^2)$ we conclude (ii). 
\end{proof}


\subsection{Simplicity}\label{simplicity}


By \cite{tan2015wn+}, \cite{Yang2019} the $\mathcal{W}_\ZZ$--modules $\Omega_\ZZ(\lambda, \alpha)$ and the $s\mathcal{W}_\ZZ$--modules $s\Omega_\ZZ(\lambda, \alpha)$ are simple for $\alpha \neq 0$. In this section we show analogous results for the $\mathcal{W}_\Gamma$--modules $\Omega_\Gamma(f, \alpha)$ and $s\mathcal{W}_\Gamma$--modules $s\Omega_\Gamma(f, \alpha)$. 
\begin{theorem}
    \begin{enumerate}[i.]
    
        \item $\Omega_\Gamma(f, \alpha)$ is simple if and only if $\alpha \neq 0$. 
        
        \item A module $\Omega_\Gamma(f, 0)$ has a unique simple submodule $N$ with $N \cong \Omega_\Gamma(f, 1)$ and $\Omega_\Gamma(f, 0)/N$ a trivial module. 
        
    \end{enumerate}
    
\end{theorem}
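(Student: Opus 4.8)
The plan is to argue directly with the explicit model of $\Omega_\Gamma(f,\alpha)$ on $\CC[L_0]$, exploiting two features of the action: $L_0$ acts as multiplication by $L_0$ (because $f(0)=1$), and for $m\in\Gamma$ the operator $f(m)^{-1}L_m$ sends $P(L_0)$ to $(L_0+m\alpha)P(L_0+m)$, raising degree by exactly one with a controlled subleading coefficient. Throughout I use that $\Gamma$ is infinite (it contains $\ZZ$).

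For (i), the ``only if'' direction is immediate: when $\alpha=0$ the subspace $N\defeq L_0\,\CC[L_0]$ is a submodule, since $L_m\bigl(L_0P(L_0)\bigr)=f(m)L_0(L_0+m)P(L_0+m)\in N$, and it is nonzero and proper, so $\Omega_\Gamma(f,0)$ is not simple. For ``if'' I would show that every nonzero submodule $M$ of $\Omega_\Gamma(f,\alpha)$ with $\alpha\neq0$ contains a nonzero constant; applying $L_0$ repeatedly then gives $L_0^k\in M$ for all $k$, whence $M=\Omega_\Gamma(f,\alpha)$. The core is a degree-reduction step. Take $0\neq P\in M$ of degree $d\ge1$ with leading coefficient $a_d$, and set
\[
Q_m\defeq f(m)^{-1}L_mP-L_0P=(L_0+m\alpha)P(L_0+m)-L_0P(L_0)\in M.
\]
Then $Q_m$ has degree $\le d$, and its coefficient of $L_0^{d}$ is $m(d+\alpha)a_d$. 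If $d+\alpha\neq0$, pick $m\in\Gamma\setminus\{0\}$; then $Q_m$ has degree exactly $d$ and $Q_m-m(d+\alpha)P\in M$ has degree $\le d-1$. This last polynomial equals $(L_0+m\alpha)P(L_0+m)-(L_0+m(d+\alpha))P(L_0)$; were it zero for every $m\in\Gamma\setminus\{0\}$, setting $L_0=0$ would force $\alpha P(m)=(d+\alpha)P(0)$ for all such $m$, hence $P$ constant (as $\alpha\neq0$ and $\Gamma$ is infinite), contradicting $d\ge1$ — so for a suitable $m$ it is a nonzero element of $M$ of degree $\le d-1$. If instead $d+\alpha=0$, i.e.\ $\alpha=-d$, then $Q_m$ already has degree $\le d-1$, and a short computation gives its coefficient of $L_0^{d-1}$ as $-ma_{d-1}-\tfrac{d(d+1)}{2}m^2a_d$, a nonzero quadratic polynomial in $m$, hence nonzero for all but finitely many $m\in\Gamma$. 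Either way $M$ acquires a nonzero element of degree $\le d-1$; iterating produces a nonzero constant.

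For (ii) (with $\alpha=0$): one checks directly from the two defining formulas that $P(L_0)\mapsto L_0P(L_0)$ is a module homomorphism $\Omega_\Gamma(f,1)\to\Omega_\Gamma(f,0)$, injective with image $N=L_0\,\CC[L_0]$ and with cokernel the trivial module (because $L_m\cdot 1=f(m)L_0\in N$). By part (i), $\Omega_\Gamma(f,1)$ is simple, so $N$ is a simple submodule. To see it is the only one, let $M'\subseteq\Omega_\Gamma(f,0)$ be simple: if $M'\subseteq N$ then $M'=N$ by simplicity of $N$; if $M'\not\subseteq N$, then $M'\cap N$ is a proper submodule of the simple module $M'$, hence $0$, so $M'$ embeds into $\Omega_\Gamma(f,0)/N\cong\CC$ and is a one-dimensional trivial submodule — but $\Omega_\Gamma(f,0)$ has no trivial submodule, since $L_0$ acts injectively as multiplication by $L_0$. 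Hence $M'=N$; in fact the only submodules are $0$, $N$, $\Omega_\Gamma(f,0)$.

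The step I expect to cause the most trouble is the degree reduction in (i): one must keep careful track of leading and subleading coefficients, isolate the degenerate value $\alpha=-d$, and use that $\Gamma$ is infinite to choose an $m$ for which the constructed polynomial genuinely fails to vanish. The remainder is routine manipulation of the defining formulas.
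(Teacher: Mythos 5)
Your proof is correct, but your argument for part (i) takes a genuinely different route from the paper's. The paper first twists by an automorphism (\Cref{twisted module}) to reduce to $f\equiv 1$, then picks an element $P$ of \emph{minimal} degree $s$ in a nonzero submodule $M$; since $(L_m-L_0)P\in M$ has degree $\le s$, minimality forces it to be a scalar multiple of $P$, i.e.\ $(L_0+m\alpha)P(L_0+m)=(L_0+c_m)P(L_0)$, and then plugging in a root $z$ of $P$ yields $P(z+m)=0$ for all but one $m\in\Gamma$, so $P$ would have infinitely many roots unless it is constant. You instead work with a general $f$ (using $f(m)^{-1}L_m$), start from an element of degree $d\ge 1$, and explicitly manufacture a nonzero element of strictly smaller degree by tracking the coefficients of $L_0^d$ and $L_0^{d-1}$, with a case split on whether $d+\alpha=0$. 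Both arrive at ``any nonzero submodule contains a nonzero constant, hence equals the whole module.'' The paper's eigenvector-plus-roots trick is shorter and avoids the subleading-coefficient computation and the degenerate-value split; your degree-reduction argument is more hands-on and has the minor advantage of not invoking the twist lemma. For (ii) the two arguments are essentially identical (both use $P\mapsto L_0P$), though you additionally spell out why $N$ is the \emph{unique} simple submodule (via $M'\cap N$ and the absence of trivial submodules) and why the quotient is trivial, steps the paper leaves implicit. All of your coefficient computations check out, including $m(d+\alpha)a_d$ for the $L_0^d$-coefficient and $-ma_{d-1}-\tfrac{d(d+1)}{2}m^2a_d$ for the $L_0^{d-1}$-coefficient when $\alpha=-d$.
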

\begin{proof} 
    (i) \; By \Cref{twisted module}, we may assume $f \equiv 1$. Let $M$ be a non-zero submodule of $\Omega_\Gamma(f, \alpha)$. Let $P(L_0) \in M$ be a polynomial of minimal degree, denote this degree by $s$. Then necessarily for each $m \in \Gamma$ the element 
    \[
    (L_m - L_0)P(L_0) = (L_0 + m\alpha)P(L_0 + m) - L_0 P(L_0)
    \]
    is in $M$ and is of degree at most $s$. As $s$ is the minimal degree, we must have that 
    \[
    (L_m - L_0)P(L_0) = c_m P(L_0)
    \]
    for some constant $c_m$. This gives us 
    \[
    (L_0 + m\alpha) P(L_0 + m) = (L_0 + c_m)P(L_0)
    \]
    Let $z \in \CC$ be a root of $P(L_0)$. Then we know 
    \[
    (z + m \alpha) P(z + m) = 0
    \]
    which means that for every $m$ such that $m\alpha \neq -z$, $P(z+ m) = 0$, which implies $P(L_0) = 0$. Therefore, $M$ does not contain any non-constant polynomials and we conclude (i).
    \\
   \\
   (ii) \; Let $N \subset \Omega_\Gamma(f, 0)$ be the set consisting of polynomials with zero constant term. It is an easy check that $N$ is a submodule. 
    It remains to show that $N \cong \Omega_\Gamma(f, 1)$, which implies simplicity. Let $1,$ $1'$ be the generators of free $\CC[L_0]$--modules 
    \[
    \Omega_\Gamma(f, 1), \; \; \Omega_\Gamma(f, 0)
    \]
    respectively. Consider the map $\varphi: \Omega_\Gamma(f, 1) \ra \Omega_\Gamma(f, 0)$ defined by 
    \[
    \varphi(P(L_0)1) = P(L_0) L_0 1'.
    \]
    Then 
    \[
    \varphi(L_mP(L_0)1) = \varphi(f(m)(L_0 + m\alpha)P(L_0 + m)1) = f(m)(L_0 + m\alpha)P(L_0) L_01' = L_m \varphi(P(L_0)1)
    \]
    which tells us $\varphi$ is a monomorphism. Therefore, $\Omega_\Gamma(f, 1) \cong N$. 
\end{proof}


\begin{theorem}
\begin{enumerate}[i.]
    
    \item $s\Omega_\Gamma(f, \alpha)$ is simple if and only if $\alpha \neq 0$.
    
    \item A module $s\Omega_\Gamma(f, 0)$ has a unique simple submodule $N$ with $N \cong \Pi(s\Omega_\Gamma(f, 1/2))$ and $s\Omega_\Gamma(f, 0)/N$ a trivial module. 

\end{enumerate}
\end{theorem}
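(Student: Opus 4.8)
The plan is to follow the template of the $\Omega_\Gamma(f,\alpha)$ theorem, treating the two $\ZZ_2$-homogeneous components separately, and to realize the $\alpha=0$ case through an explicit parity-shifting embedding. For the direction $\alpha\neq 0\Rightarrow$ simple in (i): by the twisted-module proposition (part ii) we may twist by $\varphi_{1,f^{-1}}\in\aut(s\mathcal{W}_\Gamma)$ and so assume $f\equiv 1$, since twisting by an automorphism preserves simplicity. Let $M=M_{\bar 0}\oplus M_{\bar 1}$ be a nonzero graded submodule, $M_{\bar 0}\subseteq\CC[L_0]$, $M_{\bar 1}\subseteq\xi\CC[L_0]$. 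First, $M_{\bar 0}\neq 0$: otherwise $G_r M_{\bar 1}\subseteq M_{\bar 0}=0$, but $G_r(\xi Q(L_0))=(L_0+2r\alpha)Q(L_0+r)$ vanishes only for $Q=0$, forcing $M_{\bar 1}=0$. Now take $P\in M_{\bar 0}$ of minimal degree. As in the even case, $(L_m-L_0)P=(L_0+m\alpha)P(L_0+m)-L_0P(L_0)$ lies in $M_{\bar 0}$ with degree at most $\deg P$, hence equals $c_mP$ for a constant $c_m$; evaluating $(L_0+m\alpha)P(L_0+m)=(L_0+c_m)P(L_0)$ at a root $z$ of $P$ gives $P(z+m)=0$ whenever $m\alpha\neq -z$, i.e. for cofinitely many $m\in\Gamma$ since $\alpha\neq 0$, so $P$ is a nonzero constant and $1\cdot w\in M$ up to scaling. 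Subtracting $L_m(1\cdot w)=(L_0+m\alpha)w$ for two distinct $m$ and using $\alpha\neq 0$ shows $w\in M$, hence $\CC[L_0]\cdot w\subseteq M_{\bar 0}$ (as $L_0$ acts by multiplication), and then $G_r(\CC[L_0]\cdot w)=\xi\CC[L_0]\cdot w$ gives $M_{\bar 1}=\xi\CC[L_0]$, so $M=s\Omega_\Gamma(f,\alpha)$.

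For part (ii) and the construction of $N$: write $s\Omega_\Gamma(f,1/2)$ on generators $v$ (even), $\xi v$ (odd), and $s\Omega_\Gamma(f,0)$ on generator $w$ (even). Define $\varphi: \Pi(s\Omega_\Gamma(f,1/2))\to s\Omega_\Gamma(f,0)$ by $\varphi(\xi Q(L_0)v)=Q(L_0)L_0\,w$ and $\varphi(P(L_0)v)=P(L_0)\,\xi w$; this respects parity because $\Pi$ interchanges the two components. A direct comparison of the $L_m$- and $G_r$-actions on $v$ and on $\xi v$ shows $\varphi$ is a homomorphism of $s\mathcal{W}_\Gamma$-modules; it is injective, with image $N:=L_0\CC[L_0]\,w\oplus\xi\CC[L_0]\,w$. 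Since $L_m(1\cdot w)=L_0 w\in N$ and $G_r(1\cdot w)=\xi w\in N$, the quotient $s\Omega_\Gamma(f,0)/N$ is one-dimensional with zero action; in particular $N$ is a proper nonzero submodule, which also yields the direction $\alpha=0\Rightarrow$ not simple in (i). By part (i) with $\alpha=1/2\neq 0$, $s\Omega_\Gamma(f,1/2)$ is simple, hence so is $N\cong\Pi(s\Omega_\Gamma(f,1/2))$.

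For uniqueness: because $L_0\in\fh$ acts on $s\Omega_\Gamma(f,0)$ by multiplication by $L_0$, no nonzero vector is annihilated by $L_0$, so $s\Omega_\Gamma(f,0)$ has no one-dimensional trivial submodule. If $M\neq 0$ is any submodule, then $M\cap N\in\{0,N\}$ as $N$ is simple; if $M\cap N=0$ then $M$ embeds into the trivial module $s\Omega_\Gamma(f,0)/N$ and would itself be a trivial submodule, so $M=0$, a contradiction; hence $N\subseteq M$. Thus every nonzero submodule contains $N$, so $N$ is the unique simple submodule and $s\Omega_\Gamma(f,0)/N$ is trivial, completing (ii).

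I expect the main obstacle to be the graded bookkeeping rather than any genuinely new idea: one must insert the factor $L_0$ in exactly the even component when defining $\varphi$ and then verify all four action identities, and in the minimal-degree step one must keep track of which of the two components' polynomials is constant (cf. the dichotomy in the classification theorem) so that the degree drop under $L_m-L_0$ behaves as in the $\Omega_\Gamma$ case. The observation that $G_r$ is injective enough to transfer nonvanishing between the even and odd parts is what makes $M_{\bar 0}\neq 0$, and hence the whole argument, go through.
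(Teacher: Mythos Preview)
Your argument is correct. The main methodological difference lies in part (i). The paper exploits the decomposition $(s\Omega_\Gamma)_{\overline 0}\cong\Omega_{\Gamma_0}(f_0,\alpha)$, $(s\Omega_\Gamma)_{\overline 1}\cong\Omega_{\Gamma_0}(f_0,\alpha+1/2)$ and invokes the already-proved simplicity of these $\mathcal W_{\Gamma_0}$-modules; this forces $N_{\overline 0}$ and $N_{\overline 1}$ to be $0$ or everything, and a short case analysis (generic $\alpha$ versus $\alpha=-1/2$) finishes the job. You instead rerun the minimal-degree argument directly inside $M_{\overline 0}$, after first using injectivity of $G_r$ on the odd part to guarantee $M_{\overline 0}\neq 0$. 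Your route is more self-contained and avoids the $\alpha=-1/2$ case split; the paper's route is more modular and reuses the non-super result verbatim.

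For part (ii) the two proofs construct the same embedding $\Pi(s\Omega_\Gamma(f,1/2))\hookrightarrow s\Omega_\Gamma(f,0)$, with the factor $L_0$ inserted on the even side. Your uniqueness argument via $M\cap N\in\{0,N\}$ together with the observation that $L_0$ acts without kernel (so no trivial submodule exists) is a bit crisper than the paper's, which instead appeals once more to the submodule structure of $\Omega_{\Gamma_0}(f_0,0)$ and $\Omega_{\Gamma_0}(f_0,1/2)$. One cosmetic point: once you have shown the minimal-degree $P$ is a nonzero constant, the generator already lies in $M$, so the subsequent ``subtract $L_{m_1}w-L_{m_2}w$'' step is redundant (though harmless).
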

\begin{proof}
    (i) \; Assume that $N = N_{\overline{0}} \oplus N_{\overline{1}}$ is a non-zero submodule of $s\Omega_\Gamma(f, \alpha)$, and let us denote $f_0 \defeq f\big|_{\Gamma_0}$. Then we must have that 
    \begin{equation*}\label{subone}
    N_{\overline{0}} \subseteq s\Omega_\Gamma(f, \alpha)_{\overline{0}}
    \end{equation*}
    is a submodule of $\Omega_{\Gamma_0}(f_0, \alpha)$ and 
    \begin{equation*}\label{subtwo}
    N_{\overline{1}} \subseteq s\Omega_\Gamma(f, \alpha)_{\overline{1}}
    \end{equation*}
    is a submodule of $\Omega_{\Gamma_0}(f_0, \alpha + 1/2)$. However, for $\alpha \neq 0, -1/2$ we know that both $\Omega_{\Gamma_0}(f_0, \alpha)$ and $\Omega_{\Gamma_0}(f_0, \alpha + 1/2)$ are simple. 
    From this, it is easy to see that either $N$ is either a trivial submodule or $N = s\Omega_\Gamma(f, \alpha)$.  \\
    \\
    Assume now that $\alpha = -1/2$, then $s\Omega_\Gamma(f, -1/2)_{\overline{1}} = \Omega_{\Gamma_0}(f_0, 0)$ which we know is not simple. Let $A$ be the non-zero proper submodule of $\Omega_{\Gamma_0}(f_0,0)$. Then there are two additional possibilities for submodules, $N =  s\Omega_\Gamma(f, -1/2)_{\overline{0}} \oplus A$ or $N = N_{\overline{1}} = A$. It is easy to check that neither are submodules and therefore, $s\Omega_\Gamma(f, \alpha)$ is simple for $\alpha \neq 0$. \\
    \\
    (ii) \; Assume $\alpha = 0$ and let us again define $f_0 \defeq f\big|_{\Gamma_0}$ We note that 
    \[
        s\Omega_\Gamma(f, 0)_{\overline{0}} = \Omega_{\Gamma_0}(f_0, 0), \; s\Omega_\Gamma(f, 0)_{\overline{1}} = \Omega_{\Gamma_0}(f_0, 1/2). 
    \]
    We have shown that $\Omega_{\Gamma_0}(f_0, 0)$ is not simple; we have a unique submodule $A \cong \Omega_{\Gamma_0}(f_0, 1)$. It is a simple check to see that $N = N_{\overline{0}} \oplus N_{\overline{1}}$ with 
    \[
        N_{\overline{0}} \cong \Omega_{\Gamma_0}(f_0, 1), \; \;  N_{\overline{1}} \cong \Omega_{\Gamma_0}(f_0, 1/2) 
    \]
    is a submodule of $s\Omega_{\Gamma}(f, 0)$, and from here it is straightforward to see that the quotient by $N$ will be a trivial module. \\
    \\
    Moreover, let us show that $N \cong \Pi(s\Omega_\Gamma(f, 1/2))$. 
    Define $\varphi: \Pi(s\Omega_\Gamma(f, 1/2)) \ra s\Omega_\Gamma(f, 0)$ by 
    \[
    \varphi(P(L_0)1) \defeq P(L_0) L_0 1', \; \; \varphi(P(L_0) \xi) \defeq P(L_0) \xi'
    \]
    where $1$, $\xi$ are free generators of $\Pi(s\Omega(f, 1/2))$ viewed as a $\CC[L_0] \oplus \xi \CC[L_0]$ module, and similarly for $1'$, $\xi'$ and $s\Omega_\Gamma(f, 0)$. Then we see that 
    \begin{align*}
    \varphi(G_r P(L_0) 1) &= \varphi(f(r)(L_0 + r) P(L_0 + r) \xi) = f(r)(L_0 + r)P(L_0 + r)\xi' \\
    G_r\varphi(P(L_0) 1) &= G_rP(L_0)L_0 1' = f(r)P(L_0 + r)(L_0 + 1) \xi' \\
    \varphi(G_r P(L_0) \xi) &= \varphi(f(r) P(L_0 + r) 1) = f(r)P(L_0 + r) L_0 1' \\
    G_r \varphi(P(L_0) \xi) &= G_r P(L_0) \xi' = f(r) L_0 P(L_0 + r) 1'.
    \end{align*}
    An identical process will confirm that $L_m\varphi(P(L_0) 1) = \varphi(L_m P(L_0)1)$ and $L_m \varphi(P(L_0) \xi) = \varphi(L_m P(L_0) \xi)$. Therefore, $\varphi$ is a monomorphism, and we conclude $N \cong \Pi(s\Omega_\Gamma(f, 1/2)$. 
    
    \end{proof}

\section{Weighting Functor and Intermediate Series}\label{weighting functor section}


In this section we apply the weighting functor to the modules $\Omega_\Gamma(f, \alpha)$ and $s\Omega_\Gamma(f, \alpha)$ and show that the resulting modules are of the intermediate series. We begin by defining the weighting functor, first introduced by J. Nilsson in 2016. 

\begin{definition}
Let $\fg$ be a Lie (super)algebra and let $\fh \subset \fg$ be a commutative subalgebra such that $\Ad(\fh)$ acts diagonally on $\fg$, in other words
\[
\fg \defeq \bigoplus_{\mu \in \fh^*} \fg_{\mu}.
\]
View each $\mu \in \fh^*$ as a homomorphism $\overline{\mu}: \mathcal{U}(\fh) \ra \CC$. For a $\fg$--module $M$ consider the vector space 
\begin{equation}\label{weightingfunctor}
    W(M) \defeq \bigoplus_{\fm \in \text{Max}(\mathcal{U}(\fh))} \faktor{M}{\fm M} = \bigoplus_{\mu \in \fh^*} \faktor{M}{\ker(\overline{\mu}) M}
\end{equation}
Take an $\alpha \in \fh^*$ and $v \in M$. We define an action of $x_{\alpha} \in \fg_{\alpha}$ on $W(M)$ by 
\begin{equation}\label{weightingaction}
    x_\alpha (v + \ker(\overline{\mu}) M) \defeq x_\alpha v + \ker (\overline{\mu + \alpha}) M.
\end{equation}
We call the map $W$ between $\mathcal{U}(\fg)$-modules the \textit{weighting functor}. By \cite{Nilsson2016}, the assignment $M \mapsto W(M)$ is a covariant functor $W: \mathcal{U}(\fg) \ra \mathcal{U}(\fg)$. That is, for two $\fg$--modules $M, N$ each homomorphism $$\psi: M \ra N$$ induces a homomorphism $W(\psi): W(M) \ra W(N)$ by the formula
\[
W(\psi)(v + \ker(\overline{\mu})M) \defeq \psi(v) + \ker(\overline{\mu}) N.
\]
This functor is right exact; in our final remark we show that the functor is not left exact. The purpose of the weighting functor is to assign a weight module to a $\mathcal{U}(\fg)$ module. 
\end{definition}
One can refer to \cite{Nilsson2016} for the full proof that $W$ is indeed a functor. We define the weighting functor $W$ for $\mathcal{W}_\Gamma$ and $s\mathcal{W}_\Gamma$ by taking $\fh \defeq \CC L_0$. 

We now introduce the intermediate series modules and super intermediate series modules. For $\mathcal{W}_\ZZ$ and $s\mathcal{W}_\ZZ, s\mathcal{W}_{\ZZ + 1/2}$ these modules were first introduced by V. G. Kac and Y. Su, respectively. For these definitions see \cite{Mathieu1992, Yucai1995}. 
 
\begin{definition}[Intermediate series module]\label{expandedintermediatemodule}
Fix $a \in \CC$. An \textit{intermediate series module} $V_\CC(a)$ over $\mathcal{W}_\Gamma$ is a module with a basis $\{v_k\}_{k \in \CC}$ and the action
\begin{equation}\label{intermediateaction}
L_m v_k \defeq (k - am)v_{k - m}, \; \; m \in \Gamma.
\end{equation}
\end{definition}



\begin{definition}[Super intermediate series module]
    Fix $a \in \CC$. A \textit{super intermediate series module} $sV_\CC(a)$ over $s\mathcal{W}_\Gamma$ is a module with a basis $\{v_k\}_{k \in \CC}$ in $sV_\CC(a)_{\overline{0}}$, $\{w_k\}_{k \in \CC}$ in $sV_\CC(a)_{\overline{1}}$ and the actions
    \begin{align}
        L_mv_k &\defeq (k - am)v_{k - m}, \; \;\; \; \; L_mw_j \defeq (j - (a - 1/2)m)w_{j - m} \label{evensuperintmedseriesaction} \\
        G_rv_k &\defeq w_{k - r}, \; \; \; \; \; \; \; \; \; \; \; \; \; \; \; \; \; \; \; \; G_rw_j \defeq (j - 2r(a - 1/2))v_{j - r} \label{oddsuperintermediateseriesmodaction}
    \end{align}
    for all $m \in \Gamma_0$, $r \in \Gamma_1$. We note that 
    \[
    sV_\CC(a)_{\overline{0}} \cong V_\CC(a), \; \; \; \;sV_\CC(a)_{\overline{1}} \cong V_\CC(a - 1/2).
    \]
\end{definition}

\begin{remark} 
As a $\mathcal{W}_\Gamma$ module, $V_\CC(a)$ is not simple for $\Gamma \neq \CC$. As a $\mathcal{W}_\ZZ$--module one has 
\[
V_\CC(a) = \bigoplus_{b \in \CC/\ZZ} V_\ZZ(a, b)
\]
where $V_\ZZ(a,b)$ is the intermediate series module introduced by V. G. Kac. One may refer to \cite{Mathieu1992} for the definition of $V_\ZZ(a,b)$. 
Similarly, as an $s\mathcal{W}_\ZZ$ module, 
\[
sV_\CC(a) = \bigoplus_{b \in \CC/\ZZ} sV_\ZZ(a, b) 
\]
and as an $s\mathcal{W}_{\ZZ + 1/2}$ module, 
\[
sV_\CC(a) = \bigoplus_{b \in \CC/\ZZ} sV_{\ZZ + 1/2}(a, b)
\]
where $sV_\ZZ$ and $sV_{\ZZ + 1/2}$ are the Ramond and Neveu-Schwarz algebras, respectively. One may refer to \cite{Yucai1995} for the full definition of $sV_\ZZ(a, b)$ and $sV_{\ZZ + 1/2}(a,b)$. 
\end{remark}


\subsection{Theorem}


\begin{theorem}\label{in this we show lambda can be 1} 
\begin{enumerate}[i.]
    
    \item If $\tilde{f} \in \Gamma^{\lor}$ extends to $f \in \CC^{\lor}$ (i.e., $f\big|_{\Gamma} = \tilde{f}$), then 
    \[
     W(\Omega_\Gamma(\tilde{f}, \alpha)) \cong V_\CC(1-\alpha).
    \]
    
    \item  A $\mathcal{W}_\CC$ module $V_\CC(\alpha)$ is simple if and only if $\alpha \neq 0, 1$. The $\mathcal{W}_\CC$ module $V_\CC(0)$ has a unique proper submodule $\CC v_0$, and the module $V_\CC(1)$ has a unique proper submodule $\oplus_{z \in \CC^*} \CC v_z.$

\end{enumerate}
\end{theorem}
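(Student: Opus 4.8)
The plan is to prove (i) by computing $W(\Omega_\Gamma(\tilde f,\alpha))$ directly from the definition and exhibiting an explicit isomorphism, and to prove (ii) by observing that every submodule of $V_\CC(\alpha)$ is a sum of weight lines and then running a short case analysis on $\alpha$.

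For (i): since $\fh=\CC L_0$ is one-dimensional and commutative, identify $\fh^{*}$ with $\CC$ via $\mu\mapsto\mu(L_0)$, and hence $\mathrm{Max}(\mathcal U(\fh))=\mathrm{Max}(\CC[L_0])$ with $\CC$ via $\lambda\mapsto(L_0-\lambda)$. For $M=\Omega_\Gamma(\tilde f,\alpha)=\CC[L_0]$ one has $\ker(\overline\lambda)M=(L_0-\lambda)\CC[L_0]$, so each summand $M/\ker(\overline\lambda)M$ of $W(M)$ in \eqref{weightingfunctor} is one-dimensional, spanned by the image $u_\lambda$ of the constant polynomial $1$. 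From $L_m\cdot 1=\tilde f(m)(L_0+m\alpha)$, from \eqref{weightingaction}, and from $[L_0,L_m]=-mL_m$ (so $L_m$ sends the $\lambda$-summand to the $(\lambda-m)$-summand), substituting $L_0=\lambda-m$ gives
\[
L_m\,u_\lambda=\tilde f(m)\bigl(\lambda-m(1-\alpha)\bigr)\,u_{\lambda-m}.
\]
Comparing with \eqref{intermediateaction} at $a=1-\alpha$, the two actions differ only by the nonzero scalar $\tilde f(m)$. Using the extension $f\in\CC^{\lor}$, define $\phi\colon W(\Omega_\Gamma(\tilde f,\alpha))\to V_\CC(1-\alpha)$ by $u_\lambda\mapsto f(\lambda)v_\lambda$; it is a linear bijection because $f(\lambda)\in\CC^{*}$, and the identity $\phi(L_m u_\lambda)=L_m\phi(u_\lambda)$ reduces to $\tilde f(m)f(\lambda-m)=f(\lambda)$, which holds since $f$ is a homomorphism restricting to $\tilde f$ on $\Gamma$. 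Hence $\phi$ is an isomorphism of $\mathcal W_\Gamma$-modules.

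For (ii), where $\Gamma=\CC$: on $V_\CC(\alpha)$ the operator $L_0$ acts diagonally, $L_0v_k=kv_k$, with pairwise distinct one-dimensional eigenspaces $\CC v_k$, so any $L_0$-stable subspace — in particular any submodule $N$ — is graded, $N=\oplus_{k\in S}\CC v_k$ with $S=\{k\in\CC:v_k\in N\}$; by \eqref{intermediateaction}, $\oplus_{k\in S}\CC v_k$ is a submodule iff $S$ satisfies: for all $k\in S$ and all $m\in\CC$, either $k=\alpha m$ or $k-m\in S$. If $\alpha\notin\{0,1\}$ and $S\neq\emptyset$, pick $k_0\in S$: since $\alpha\neq0$ only the single value $m=k_0/\alpha$ is excluded, so $S\supseteq\CC\setminus\{k_0(\alpha-1)/\alpha\}$, and one deduces $0\in S$ (directly if $k_0=0$, otherwise because $k_0(\alpha-1)/\alpha\neq0$ as $\alpha\neq1$); applying the condition at $k=0$ then forces $S=\CC$, so $V_\CC(\alpha)$ is simple. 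If $\alpha=0$, then $L_mv_0=0$, so $\CC v_0$ is a submodule, and any submodule containing some $v_{k_0}$ with $k_0\neq0$ must be all of $V_\CC(0)$; hence $\CC v_0$ is the unique proper nonzero submodule. If $\alpha=1$, set $N_1:=\oplus_{w\in\CC^{*}}\CC v_w$; it is a submodule (for $w\neq0$: $L_wv_w=0$, and $L_mv_w\in N_1$ when $m\neq w$), any nonempty $S$ satisfies $S\supseteq\CC\setminus\{0\}$, and $0\in S$ forces $S=\CC$; hence $N_1$ is the unique proper nonzero submodule. Thus $V_\CC(\alpha)$ is not simple exactly when $\alpha\in\{0,1\}$, which together with the first case proves the equivalence.

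I expect the only genuine effort to lie in the bookkeeping for (ii): verifying that $\CC v_0$ and $N_1$ are honestly submodules, that no other proper submodules exist, and correctly identifying in each case the single value of $m$ at which the structure constant $k-\alpha m$ vanishes. In (i) the one point to be careful about is unwinding \eqref{weightingfunctor}--\eqref{weightingaction} for the one-dimensional commutative $\fh$ and getting the sign of the $\fh$-weight of $L_m$ right.
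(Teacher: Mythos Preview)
Your proposal is correct and follows essentially the same approach as the paper. In (i) both you and the paper compute the action on the image of $1$ in each quotient and then rescale by a value of the extension $f$ to remove the $\tilde f(m)$ factor (you use $u_\lambda\mapsto f(\lambda)v_\lambda$, the paper uses $y_c:=f(-c)v_c$, which are inverse rescalings); in (ii) your argument is the same in spirit but more carefully written, making explicit the graded structure of submodules and the reduction to a condition on the index set $S$, whereas the paper argues more informally by checking that $v_0$ generates and can be reached.
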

\begin{proof}
    (i) \;Let $\Omega_\Gamma(f, \alpha) = M$ be a $\mathcal{W}_\Gamma$ module. For each $z \in \CC$ let $\mu_z \in \fh^*$ denote the homomorphism defined by $L_0 \mapsto z$. Then we know $\ker(\overline{\mu_z}) = \CC[L_0](L_0 - z).$ Define 
    \[
    v_c \defeq 1 + \ker(\overline{\mu_c})M,
    \]
    and in particular we note that it spans the weight space $W(\Omega_\Gamma(\tilde{f}, \alpha))_c$. The vectors $v_c$ for $c \in \CC$ form a basis of $W(\Omega_\Gamma(\tilde{f}, \alpha))$. One has 
    \[
    L_m v_c = \tilde{f}(m)(L_0 + m\alpha)1 + \ker(\overline{\mu_{c-m}}) M = \tilde{f}(m)(c - m(1 - \alpha))v_{c - m}
    \]
    and by setting $y_c \defeq f(-c) v_c$ we obtain
    \[
    L_m y_c = f(-(c - m))(c- m(1-\alpha))v_{c - m} = (c - m(1-\alpha))y_{c - m}.
    \]
    Therefore, we conclude $W(\Omega_\Gamma(f, \alpha)) \cong V_\CC(1 - \alpha)$, as desired. \\
    \\
    (ii) \;Now let us show that $V_\CC(\alpha)$ is simple for all $\alpha \neq 0, 1$. In the case of $\alpha = 1$, consider the set $\oplus_{z \in \CC^*} \CC v_z$. We see that $L_m v_k = (k-m)v_{k-m}$ and we cannot arrive at $v_0$, confirming this is a submodule. Furthermore, from $v_0$ we generate the whole module, so there are no other submodules. 
    
    For $\alpha = 0$ we have the submodule $\CC v_0$ as 
    $
    L_m v_0 = 0
    $
    for any $m \in \CC$. Moreover, from any other element $v_k$ we can arrive at $v_0$, confirming that there are no other submodules. Lastly, it is easy to see that actions on the element $v_0$ generate all of $V_\CC(\alpha)$ if $\alpha \neq 0, 1$. Similarly, $L_k v_k$ will bring us to $v_0$, and we conclude that $V_\CC(\alpha)$ is simple if and only if $\alpha \neq 0, 1$. 
\end{proof}



\begin{theorem}
    \begin{enumerate}[i.]
    
    \item If $\tilde{f} \in \Gamma^{\lor}$ extends to $f \in (\CC \times \ZZ_2)^{\lor}$, then 
    \[
     W(s\Omega_\Gamma(\tilde{f}, \alpha)) \cong sV_\CC(1-\alpha).
    \]
    
    \item An $s\mathcal{W}_\CC$ module $sV_\CC(\alpha)$ is simple if and only if $\alpha \neq 1/2, 1$. The $s\mathcal{W}_\CC$ module $sV_\CC(1/2)$ has a unique proper submodule $\CC w_0$, and the module $sV_\CC(1)$ has a unique proper submodule $\oplus_{k \in \CC^*} \CC v_k$. 
    
    \end{enumerate}
\end{theorem}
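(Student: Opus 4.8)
The plan is to copy the proof of \Cref{in this we show lambda can be 1} almost verbatim, adding the bookkeeping forced by the $\ZZ_2$-grading.

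For (i) I would compute $W(s\Omega_\Gamma(\tilde{f},\alpha))$ by hand. Since $\fh=\CC L_0$ is purely even, $W$ respects the grading; identifying $s\Omega_\Gamma(\tilde{f},\alpha)$ with $\CC[L_0]\oplus\xi\CC[L_0]$ and writing $\mu_z$ for the functional $L_0\mapsto z$, we get $\ker(\overline{\mu_z})=\CC[L_0](L_0-z)$, so $v_c:=1+\ker(\overline{\mu_c})M$ and $w_c:=\xi+\ker(\overline{\mu_c})M$ span the one-dimensional weight spaces and give a basis of $W(s\Omega_\Gamma(\tilde{f},\alpha))$. Substituting the module formulas of $s\Omega_\Gamma$ into \eqref{weightingaction} and evaluating the polynomials at the appropriate point, exactly as in the non-super case, yields
\[
L_m v_c=\tilde{f}(m)\bigl(c-m(1-\alpha)\bigr)v_{c-m},\qquad L_m w_c=\tilde{f}(m)\bigl(c-m(1/2-\alpha)\bigr)w_{c-m},
\]
\[
G_r v_c=\tilde{f}(r)\,w_{c-r},\qquad G_r w_c=\tilde{f}(r)\bigl(c-2r(1/2-\alpha)\bigr)v_{c-r}.
\]
Comparing with \eqref{evensuperintmedseriesaction}--\eqref{oddsuperintermediateseriesmodaction} at $a=1-\alpha$, these agree up to the scalars $\tilde{f}(m),\tilde{f}(r)$. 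To absorb them, extend $\tilde{f}$ to $f\in(\CC\times\ZZ_2)^\lor$ and set $y_c:=f(-c,0)v_c$, $z_c:=f(-c,1)w_c$; using $f(-c,0)f(m,0)=f(m-c,0)$, $f(-c,1)f(m,0)=f(m-c,1)$, $f(-c,0)f(r,1)=f(r-c,1)$ and $f(-c,1)f(r,1)=f(r-c,0)$, a direct substitution shows $y_c\mapsto v_c$, $z_c\mapsto w_c$ is an isomorphism onto $sV_\CC(1-\alpha)$. The only subtlety is that the even basis vectors must be rescaled by the value of $f$ on $(\,\cdot\,,0)$ and the odd ones by its value on $(\,\cdot\,,1)$, which is exactly why $\tilde{f}$ is assumed to extend over $\CC\times\ZZ_2$.

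For (ii) I would pass to the parity parts: as $\mathcal{W}_\CC$-modules, $sV_\CC(\alpha)_{\overline 0}\cong V_\CC(\alpha)$ and $sV_\CC(\alpha)_{\overline 1}\cong V_\CC(\alpha-1/2)$, whose submodules are listed in \Cref{in this we show lambda can be 1}(ii): each is simple unless its parameter is $0$ or $1$. An $s\mathcal{W}_\CC$-submodule $N$ is graded, $N=N_{\overline 0}\oplus N_{\overline 1}$, and $N_{\overline 0}$ (resp.\ $N_{\overline 1}$) is a $\mathcal{W}_\CC$-submodule of $V_\CC(\alpha)$ (resp.\ $V_\CC(\alpha-1/2)$), so only finitely many candidates arise. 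The two parts are glued by the observations that $G_r v_k=w_{k-r}$ is never zero --- so $N_{\overline 0}\neq 0$ forces every $w_j\in N_{\overline 1}$ --- and $G_r w_j=(j-2r(\alpha-1/2))v_{j-r}$ is nonzero for generic $r$ --- so $N_{\overline 1}\neq 0$ forces $N_{\overline 0}$ to contain every $v_k$ with $k\neq 0$, and all of $V_\CC(\alpha)$ when the latter is simple. Running the case analysis over the values $\alpha\in\{0,1,1/2,3/2\}$ where some parity part is reducible: at $\alpha=0$ and $\alpha=3/2$ these gluing observations collapse every proper candidate to $0$, so $sV_\CC(\alpha)$ is simple; at $\alpha=1/2$ one has $L_m w_0=0=G_r w_0$, so $\CC w_0$ is a submodule, and the gluing shows it is the only proper one; at $\alpha=1$ the span of all $w_j$ together with all $v_k$ with $k\neq 0$ is closed under the action (the coefficients $k-m$ and $j-r$ vanish precisely when $v_0$ would be produced), is proper of codimension one with trivial quotient $\CC v_0$, and is the unique proper submodule. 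For every other $\alpha$ both parity parts are simple and the two observations give simplicity at once.

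I expect the difficulty to be purely organisational: reducibility of $sV_\CC(\alpha)$ occurs only at $\alpha\in\{1/2,1\}$, whereas reducibility of one parity part already occurs on the larger set $\{0,1/2,1,3/2\}$, so one must check by hand that at $\alpha=0$ and $\alpha=3/2$ the super structure ``heals'' the reducibility of a single parity part --- exactly what the non-vanishing of $G_r v_k$ and the generic non-vanishing of $G_r w_j$ accomplish. A minor point in (i) is keeping straight which $\ZZ_2$-component of $f$ rescales $v_c$ versus $w_c$.
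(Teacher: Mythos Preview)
Your proposal is correct and follows essentially the same approach as the paper. In part (i) you choose the same basis $v_c,w_c$, compute the same actions, and perform the same rescaling $v_c\mapsto f(-c,0)v_c$, $w_c\mapsto f(-c,1)w_c$; in part (ii) you carry out the same reduction to the $\mathcal{W}_\CC$-submodule structure of the two parity components and the same case check over $\alpha\in\{0,1/2,1,3/2\}$, with your ``gluing observations'' making explicit what the paper records as routine verifications.
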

\begin{proof}
    (i) Let $s\Omega_\Gamma(\tilde{f}, \alpha) = M$ be an $s\mathcal{W}_\Gamma$ module. For each $z \in \CC$ let $\mu_z \in \fh^*$ denote the homomorphism defined by $L_0 \mapsto z$. Then $\ker(\overline{\mu_z}) = \CC[L_0](L_0 - z)$. Define 
    \[
    v_c \defeq 1 + \ker(\overline{\mu_c})M, \; \; w_c \defeq \xi + \ker(\overline{\mu_c})M.
    \]
    The vectors $v_c, w_c$ for $c \in \CC$ form a basis of $W(s\Omega_\Gamma(\tilde{f}, \alpha))$. We find the actions by $s\mathcal{W}_\Gamma$ to be
    \begin{align*}
    L_mv_c &= \tilde{f}(m;0)(c - m(1 - \alpha)) v_{c - m}, \; \; \; \; \; L_m w_c = \tilde{f}(m;0)(c - m(1/2 - \alpha)w_{c - m} \\ 
    G_rv_c &= \tilde{f}(r;1)w_{c - r}, \; \; \; \; \; \; \; \; \; \; \; \; \; \; \; \; \; \; \; \; \; \; \; \; \; \; \; \; \; \; G_r w_c = \tilde{f}(r;1)(c - 2r(1/2 - \alpha))v_{c - r}. 
    \end{align*}
    Setting $\tilde{v}_c = f(-c;0)v_c$, $\tilde{w}_c = f(-c;1)w_c$ we find 
    \begin{align*}
    G_r \tilde{v}_c &= f(-c;0) G_r1 + \ker(\overline{\mu_{c -r}})M = f(-c;0) \tilde{f}(r;1) w_{c-r} = \tilde{w}_{c - r},\\
    G_r\tilde{w}_c &= f(-c; 1) G_r \xi + \ker(\overline{\mu_{c - r}})M = f(-c;1) \tilde{f}(r;1)(c - r(1 - 2\alpha)) v_{c - r} = (c - 2r(1 - \alpha)) \tilde{v}_{c - r}.
    \end{align*}
    Therefore, we conclude $W(s\Omega_\Gamma(f, \alpha)) \cong sV_\CC(1-\alpha)$. \\ 
    \\
    (ii) \;Considering simplicity, let us recall that $sV_\CC(\alpha)_{\overline{0}} \cong V_\CC(\alpha), sV_\CC(\alpha)_{\overline{1}} \cong V_\CC(\alpha - 1/2)$. Moreover, we know that $V_\CC(\alpha)$ is simple if and only if $\alpha \neq 0, 1$. If $N$ is a non-zero submodule of $sV_\CC(\alpha)$, then
    \[
    N_{\overline{0}} \subset sV_\CC(\alpha)_{\overline{0}}, \; \; N_{\overline{1}} \subset sV_\CC(\alpha)_{\overline{1}}
    \]
    are necessarily submodules of $V_\CC(\alpha), V_\CC(\alpha - 1/2)$ respectively. Then we just need to check the cases of $\alpha = 0, 1, 1/2, 3/2$. 
    
    If $\alpha = 0$, then $sV_\CC(0)_{\overline{0}} \cong V_\CC(0)$ has a trivial submodule spanned by $v_0$. This gives us two options for $N$: 
    \[
    N = N_{\overline{0}} = \CC v_0 \; \text{ and } \; N_{\overline{0}} = \CC v_0, \; \;  N_{\overline{1}} = sV_\CC(0)_{\overline{1}}.
    \]
    It is a simple check to see that neither are submodules under action by $G_r$. 
    
    Next, let us consider $\alpha = 3/2$. Then $sV_\CC(3/2)_{\overline{1}} \cong V_{\CC}(1)$ contains the submodule $\oplus_{k \in \CC^*} \CC w_k$. So our choices for a submodule $N$ are 
    \[
   N = N_{\overline{1}} = \bigoplus_{k \in \CC^*} \CC w_k \;  \text{ and } \; N_{\overline{0}} = sV_\CC(3/2)_{\overline{0}}, \; \; N_{\overline{1}} = \bigoplus_{k \in \CC^*} \CC W_k.
    \]
    It is again easy to see that neither are submodules under action by $G_r$ \\
    \\
    Lastly, both $sV_\CC(1)$ and $sV_\CC(1/2)$ are not simple. For $sV_\CC(1/2)$ we consider the subset $N$ for $N_{\overline{0}} = 0, N_{\overline{1}} = \CC w_0$. A quick check confirms that 
    \[
    L_m w_0 = 0 \text{ and } G_rw_0 = 0. 
    \]
    As $m \in \CC$, we know that from any other element $w_k$ we can arrive at $w_0$, confirming there are no other submodules. 
    
    Next, in $sV_\CC(1)$ we consider the subset $N$ given by $N_{\overline{0}} = \oplus_{k \in \CC^*} \CC v_k$ and $N_{\overline{1}} = sV_{\CC}(1)_{\overline{1}}$. Notice that 
    \[
    L_mv_k = (k - m) v_{k-m}, \; \; L_m w_j =  (j - \frac{1}{2}m) w_{j - m} 
    \]
    and 
    \[
    G_rv_k = w_{k - r}, \; \; G_rw_j = (j - r)v_{j - r} 
    \]
    for $k \in \CC^*, j \in \CC$. So clearly $N$ is a submodule. Moreover, from $v_0$ we can arrive at $v_k$ for any $k \in \CC$, so $N$ is unique and we conclude the theorem.  
\end{proof}

\begin{remark}\label{last remark} 
By results in \Cref{simplicity}, we have the following short exact sequences 
\begin{align} 
    0 &\ra \Omega_\Gamma(f, 1) \xrightarrow{\psi_1} \Omega_\Gamma(f, 0) \ra \CC \ra 0 \label{omega short exact sequence 1}\\
    0 &\ra \Pi(s\Omega_\Gamma(f, 1/2)) \xrightarrow{{\psi_2}} s\Omega_\Gamma(f, 0) \ra \CC \ra 0 \label{omega short exact sequence 2}
\end{align}
where $\psi_1$, $\psi_2$ are the embeddings defined by 
\[
P(L_0) \mapsto L_0 P(L_0)
\]
and 
\[
P(L_0) + \xi Q(L_0) \mapsto L_0 P(L_0) + \xi'Q(L_0)
\]
respectively. If $f \in \Gamma^{\lor}$ extends to $\tilde{f} \in (\CC \times \ZZ_2)^{\lor}$, applying the weighting functor to \eqref{omega short exact sequence 1} and \eqref{omega short exact sequence 2} induces the exact sequences 
\begin{align*}
    0 &\ra \CC \ra V_\CC(0) \xrightarrow{W(\psi_1)} V_\CC(1) \ra \CC \ra 0 \\
    0 &\ra \CC \ra \Pi(sV_\CC(1/2)) \xrightarrow{W(\psi_2)} sV_\CC(1) \ra \CC \ra 0,
\end{align*}
where $W(\psi_1)$, $W(\psi_2)$ are the maps defined by 
\[
v_c \mapsto c v'_c
\]
and 
\[
v_c \mapsto cv'_c, \; \; \; w_c \mapsto w'_c,
\] 
respectively. This shows that the weighting functor is not left exact.
\end{remark}





\bibliographystyle{amsalpha}
\bibliography{biblio.bib}

\end{document}